\newtheorem{theorem}{Theorem}[section]
\newtheorem{lemma}[theorem]{Lemma}
\newtheorem{definition}[theorem]{Definition}
\newtheorem{proposition}[theorem]{Proposition}
\newtheorem{remark}[theorem]{Remark}
\newtheorem{corollary}[theorem]{Corollary}
\newcommand{\gm}{\gamma}
\newcommand{\M}{\mathcal{M}}
\newcommand{\R}{\mathbb{R}}
\newcommand{\cS}{\mathcal{S}}
\newcommand{\T}{\mathbb{T}}
\newcommand{\A}{\mathcal{A}}
\newcommand{\ri}{\rightarrow}
\newcommand{\eps}{\varepsilon}
\newcommand{\Proof}{\begin{proof}}
\newcommand{\End}{\end{proof}}
\newcommand{\Dr}[1]{\mbox{\rm #1}}
\newcommand{\Lip}{\Dr{Lip}}
\numberwithin{equation}{section}
\begin{document}




\title{On the vanishing discount problem\\ from the negative direction}

\author{Andrea Davini \and Lin Wang}
\address{Dip. di Matematica, {Sapienza} Universit\`a di Roma,
P.le Aldo Moro 2, 00185 Roma, Italy}

\email{davini@mat.uniroma1.it}
\address{Yau Mathematical Sciences Center, Tsinghua University, Beijing 100084, China}

\email{linwang@tsinghua.edu.cn}
\subjclass[2010]{37J50; 35F21; 35D40.}
\keywords{Hamilton-Jacobi equations, vanishing discount problems, viscosity solutions}

\begin{abstract}
\noindent It has been proved in \cite{DFIZ1} that the unique viscosity solution of
\begin{equation}\label{abs}\tag{*}
\lambda u_\lambda+H(x,d_x u_\lambda)=c(H)\qquad\hbox{in $M$},
\end{equation}
uniformly converges, for $\lambda\rightarrow 0^+$, to a specific solution $u_0$ of the critical equation
\[
H(x,d_x u)=c(H)\qquad\hbox{in $M$},
\]
where $M$ is a closed and connected Riemannian manifold and $c(H)$ is the critical value.
In this note, we consider the same problem for $\lambda\rightarrow 0^-$.
In this case,  viscosity solutions of equation \eqref{abs} are not unique, in general, so we focus
on the asymptotics of the minimal solution $u_\lambda^-$ of \eqref{abs}.
Under the assumption that constant functions are subsolutions
of the critical equation, we prove that the $u_\lambda^-$ also converges to $u_0$
as $\lambda\rightarrow 0^-$.
Furthermore, we exhibit an example of $H$ for which equation \eqref{abs} admits a unique solution for $\lambda<0$ as well.
\end{abstract}

\date{\today}
\maketitle



\section{Introduction and main results}
\setcounter{equation}{0}
\setcounter{footnote}{0}
Let $M$ be a connected, closed smooth Riemannian manifold and $H: T^*M\rightarrow \R$  a $C^3$ Tonelli Hamiltonian, where Tonelli refers to the fact that $H$ is strictly convex and superlinear with respect to $p$. We consider the Hamilton-Jacobi equation:
\begin{equation}\label{aa}\tag{A$_\lambda$}
-\lambda u+H(x,d_x u)=c(H)\qquad\hbox{in $M$},
\end{equation}
where $\lambda$ is a positive parameter and $c(H)$ is the critical value, given by \cite{CIPP}
\[c(H)=\inf_{u\in C^\infty(M,\R)}\sup_{x\in M}H(x,d_x u).
\]
We are interested in understanding the asymptotics of the solution(s) to \eqref{aa} as $\lambda\to 0^+$. The problem is well understood when $\lambda\to 0^{-}$,
see \cite{DFIZ1}:
when $\lambda<0$, in fact, equation \eqref{aa} admits a unique viscosity solution and the latter converges, as $\lambda\to 0^-$, to a specific
solution of the associated critical equation
\begin{equation}\label{ss}\tag{A$_0$}
H(x,d_x u)=c(H)\qquad\hbox{in $M$}.
\end{equation}
The interest of the result relies on the fact that  the critical equation \eqref{ss}  admits infinite solutions, even up to additive constants in general.
We also refer the reader to \cite{CCIZ,DFIZ1,Go,Go1,IsMiTr-discount1,IsMiTr-discount2,MiTr-discount,ZC} for related results.

When $\lambda>0$, the uniqueness of the viscosity solution to \eqref{aa} fails. For example, see \cite[Example 1.1]{WWY2}, the function $u_1\equiv 0$ and the 1--periodic function $u_2$
satisfying $u_2(x)=x^2/2$ for $x\in [-1/2,1/2]$ are both viscosity solutions of the equation
\[
-u(x)+\frac{1}{2}|u'(x)|^2=0,\quad x\in\T^1:=\mathbb{R}/\mathbb{Z}.
\]
Due to this nonuniqueness phenomenon, we will consider the vanishing discount problem for the minimal viscosity solutions of
\eqref{aa}. More precisely, let $\mathcal{S}_\lambda^-$ be the set of viscosity solutions of (\ref{aa}) and denote by
\[{u}^-_\lambda(x):=\min_{v \in \cS^-_\lambda}v (x).
\]
The function $u^{-}_\lambda$ is a Lipschitz continuous viscosity solution of \eqref{aa} as well, see \cite[Theorem 1.2]{WWY2}.
The asymptotic convergence is established under the assumption that constant functions are
subsolution of the critical equation \eqref{ss}.

%
\begin{theorem}\label{l11}
Let us assume that $H(x,0)\leq  c(H)$ for every $x\in M$. Then
${u}^-_\lambda$ converges to $u_0^-$ uniformly on $M$ as $\lambda\ri 0^+$, where $u_0^-$ is the unique viscosity solution of \eqref{ss}
such that $u_0^-\equiv 0$ on the projected Aubry set $\A$ associated with \eqref{ss}.
\end{theorem}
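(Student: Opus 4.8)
The plan is to follow the usual vanishing-discount strategy — uniform a priori bounds, compactness, identification of the limit — the crucial point being how the hypothesis is exploited. Write $L$ for the Tonelli Lagrangian dual to $H$. The assumption $H(x,0)\le c(H)$ says precisely that the constant $0$ is a subsolution of \eqref{ss}, and it gives $L(x,v)+c(H)\ge 0$ on $TM$, since $L(x,v)\ge 0\cdot v-H(x,0)=-H(x,0)\ge-c(H)$. Its key dynamical consequence is that along every curve $\gamma$ calibrated for the projected Aubry set $\A$ one has $L(\gamma(s),\dot\gamma(s))+c(H)\equiv 0$: such a $\gamma$ calibrates the critical subsolution $0$, so $\int_a^b\!\big(L(\gamma,\dot\gamma)+c(H)\big)\,ds=0$ for all $a<b$, and the integrand, being nonnegative, must vanish. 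Hence these curves contribute nothing to weighted actions, and this is what makes everything below work.

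The a priori estimate $\sup_{0<\lambda\le\lambda_0}\|u_\lambda^-\|_\infty<\infty$ is \textbf{the main obstacle}, and I would get it from the variational representation of $u_\lambda^-$. Let $\mathcal{T}^\lambda_t$ be the monotone backward Lax--Oleinik semigroup attached to \eqref{aa},
\[
\mathcal{T}^\lambda_t\phi(x)=\inf_{\gamma(0)=x}\Big\{e^{\lambda t}\phi(\gamma(-t))+\int_{-t}^0 e^{-\lambda s}\big(L(\gamma,\dot\gamma)+c(H)\big)\,ds\Big\},
\]
the infimum over Lipschitz curves $\gamma\colon[-t,0]\to M$ with $\gamma(0)=x$; its fixed points are exactly the solutions of \eqref{aa}. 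A concatenation argument identifies the value function $\widetilde u_\lambda(x):=\inf\{\int_{-\infty}^0 e^{-\lambda s}(L(\gamma,\dot\gamma)+c(H))\,ds:\ \gamma\colon(-\infty,0]\to M,\ \gamma(0)=x\}$ as a fixed point of $(\mathcal{T}^\lambda_t)_t$, hence a solution; analysing the backward calibrated curves of an arbitrary solution should then show that $\widetilde u_\lambda$ is the minimal one, so that $u_\lambda^-=\widetilde u_\lambda$, and in particular $u_\lambda^-\ge 0$ because the integrand is nonnegative. For the upper bound, fix $y_0\in\A$ lying on a calibrated curve $\sigma$ of $\A$; for each $x$ test $\widetilde u_\lambda(x)$ with the curve running a unit-speed geodesic from $y_0$ to $x$ in time $d(y_0,x)\le\operatorname{diam}(M)$ and prolonged backwards by $\sigma$: since the $\sigma$-part carries zero action density, the weighted action of this curve is at most $e^{\lambda\operatorname{diam}(M)}\big(\max_{x,\,|v|\le1}L(x,v)+c(H)\big)\operatorname{diam}(M)$, which is bounded for $\lambda\le1$. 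I expect the finiteness and continuity of $\widetilde u_\lambda$ and the identity $u_\lambda^-=\widetilde u_\lambda$ to be the delicate points.

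Granted $C:=\sup_{0<\lambda\le\lambda_0}\|u_\lambda^-\|_\infty<\infty$, equi-Lipschitz continuity is immediate from $H(x,d_xu_\lambda^-)=c(H)+\lambda u_\lambda^-\le c(H)+\lambda_0 C$ a.e.\ and the coercivity of $H$ in $p$; by the Arzel\`a--Ascoli theorem every sequence $\lambda_n\downarrow0$ has a subsequence along which $u_{\lambda_n}^-\to v$ uniformly, and since $\lambda_nu_{\lambda_n}^-\to0$ uniformly, stability of viscosity solutions yields that $v$ solves \eqref{ss}. It thus remains to prove that $v\equiv0$ on $\A$, for then the uniqueness asserted in the statement forces $v=u_0^-$, and since every subsequential limit equals $u_0^-$ the whole family converges uniformly to $u_0^-$.

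To show $v\equiv0$ on $\A$ I would combine two estimates. First, taking in the construction above the portion from $\A$ to $x$ to be $\eps$-minimizing for $\min_{y\in\A}\Phi(y,x)=u_0^-(x)$ ($\Phi$ the Ma\~n\'e potential), one gets $u_\lambda^-(x)\le\widetilde u_\lambda(x)\le e^{\lambda T}(u_0^-(x)+\eps)$ with $T$ independent of $\lambda$; letting $\lambda\to0^+$ and then $\eps\to0$ gives $v\le u_0^-$, in particular $v\le0$ on $\A$. Second, testing the subsolution inequality for $u_\lambda^-$ against a Mather measure $\mu$ and using the Fenchel inequality $\langle d_xu_\lambda^-,v\rangle\le L(x,v)+H(x,d_xu_\lambda^-)\le L(x,v)+c(H)+\lambda u_\lambda^-(x)$ together with the closedness of $\mu$ gives $0\le\int(L+c(H)+\lambda u_\lambda^-)\,d\mu=\lambda\int u_\lambda^-\,d\mu$ (as $\int L\,d\mu=-c(H)$), whence $\int u_\lambda^-\,d\mu\ge0$ and, in the limit, $\int v\,d\mu\ge0$; since $v\le0$ on $\A\supseteq\operatorname{supp}\mu$ this forces $v=0$ $\mu$-a.e., hence $v=0$ on the Mather set. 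Finally, being a critical solution (hence a critical subsolution), $v$ is calibrated along every curve $\gamma$ calibrated for $\A$, so by $L(\gamma,\dot\gamma)+c(H)\equiv0$ it is \emph{constant} along $\gamma$; since the $\alpha$-limit of $\gamma$ is a compact invariant subset of $\A$, it supports an invariant measure which — being carried by $\A$, where $L+c(H)=0$ — is a Mather measure, so $\gamma$ accumulates on the Mather set where $v=0$; therefore $v\equiv0$ on $\A$. In sum, the technical heart — and the only place where the non-uniqueness of solutions for $\lambda>0$ genuinely intervenes — is the uniform bound on $u_\lambda^-$.
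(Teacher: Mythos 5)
Your overall skeleton (uniform bounds, Arzel\`a--Ascoli, stability, vanishing of the limit on $\A$, then uniqueness on the Aubry set) matches the paper's, and your identification step via classical Mather measures and the fact that $L+c(H)\equiv 0$ along Aubry-calibrated curves is a reasonable alternative to the paper's argument. But the whole proposal rests on the representation $u^-_\lambda=\widetilde u_\lambda$ — and, already for the upper bound, on the weaker claim that $\widetilde u_\lambda$ is a viscosity solution of \eqref{aa} — which you explicitly leave as ``delicate points'' and never prove. This is not a routine verification: it is exactly where the wrong sign of the discount (and hence the non-uniqueness) bites, i.e.\ the technical heart of the theorem. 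The solution property of $\widetilde u_\lambda$ (finiteness, Lipschitz continuity, dynamic programming, passage from the evolution semigroup to the stationary equation) is plausible but requires real work; the minimality claim is worse. Along a backward calibrated curve $\gamma$ of an arbitrary solution $v$ with $\gamma(0)=x$ one has $v(x)=e^{\lambda t}v(\gamma(-t))+\int_{-t}^0 e^{-\lambda s}\bigl(L(\gamma,\dot\gamma)+c(H)\bigr)\,ds$, and to conclude $v(x)\ge\widetilde u_\lambda(x)$ you need $\liminf_{t\to\infty}e^{\lambda t}v(\gamma(-t))\ge 0$. Solutions of \eqref{aa} are not nonnegative in general (testing the supersolution inequality with a constant at a minimum point gives $\min_M v\le(H(x_m,0)-c(H))/\lambda\le 0$), so this exponentially amplified boundary term can a priori tend to a negative, even infinite, limit; your sketch does not close this. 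In particular the corollary $u^-_\lambda\ge 0$, which is what you use for the uniform lower bound, is not established — and it is not proved in the paper either: the lower bound actually available is $u^-_\lambda\ge-u_\lambda\ge-\|u_0\|_\infty$, coming from the inequality $u^+_\lambda\le u_\lambda$ of \cite{WWY2}.

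For comparison, the paper avoids any representation formula for the minimal solution by passing to $u^+_\lambda=-u^-_\lambda$, the maximal forward weak KAM solution of \eqref{bb}, and importing from \cite{WWY2} precisely the structure you would have to build from scratch: $u^+_\lambda$ is a fixed point of the forward Lax--Oleinik semigroup, $u^+_\lambda\le u_\lambda$, and $u^+_\lambda=u_\lambda$ on the discounted Aubry set $\mathcal A^\lambda$; combined with $0\le u_\lambda\le u_0$ from \cite{DFIZ1} this gives the equi-boundedness and equi-Lipschitz estimates of Lemma \ref{uuee}. For the identification of the limit the paper proves the stronger fact that $u^+_\lambda\equiv 0$ on $\A$ for every $\lambda>0$, via the inclusion $\A\subseteq\mathcal A^\lambda$ (Proposition \ref{prop Aubry in Sigma}), whereas your Mather-measure argument only yields the vanishing in the limit — acceptable, but moot until the a priori bounds are secured. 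To repair the proposal, either prove in detail that $\widetilde u_\lambda$ is a solution and control the boundary term along backward calibrated curves (e.g.\ by showing they converge to the discounted Aubry set, where the solution is controlled), or invoke the forward weak KAM/contact-Hamiltonian machinery of \cite{WWY2} and Theorem \ref{teo forward sol} as the paper does.
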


We point out that $u^-_0$ is the same function that we obtain when we study the asymptotics of the solutions of \eqref{aa} for $\lambda\to 0^-$,
see \cite{DFIZ1} or Theorem \ref{darre} below.

When (\ref{aa}) has a unique viscosity solution for each $\lambda<0$, Theorem \ref{l11} yields in particular that this solution converges to $u_0^-$ uniformly
on $M$ as
$\lambda\ri 0^+$.  It is worth mentioning that uniqueness of viscosity solutions of \eqref{aa} still holds under certain dynamical assumption. For example,
the equation
\begin{equation}\label{ex-unique88}\tag{E$_D$}
-\lambda u+\frac{1}{2}|d_x u|^2+U(x)=c,\quad x\in \T^1:=\mathbb{R}/\mathbb{Z},\quad \lambda>0,
\end{equation}
where $U:\T^1\to\R$ is of class $C^3$ and has a unique maximum point $x_0$ with $U(x_0)=c$ and $U''(x_0)<0$, has a unique viscosity
solution,  when $\lambda>0$ is small enough, see Section \ref{e888} below.

By \cite[Proposition 2.8]{WWY2}, $v$ is a backward (resp. forward) weak KAM solution of equation \eqref{aa} if and only if $-v$ is a forward (resp. backward)
weak KAM solution of equation:
\begin{equation}\label{bb}\tag{B$_\lambda$}
\lambda u+H(x,-d_x u)=c(H)\qquad\hbox{in $M$},
\end{equation}
where backward weak KAM solutions and  viscosity solutions are the same. The same holds for $\lambda=0$ as well.

Let $\mathcal{S}_\lambda^+$ be the set of forward weak KAM solutions of (\ref{bb}) and denote
\[
{u}_\lambda^+(x):=\sup_{v \in \cS^+_\lambda}v (x).
\]
Based on the correspondence between viscosity solutions of (\ref{aa}) and forward weak KAM solutions of (\ref{bb}), Theorem \ref{l11} is equivalent to the following

\begin{theorem}\label{222}
Let us assume that $H(x,0)\leq  c(H)$ for every $x\in M$. Then  ${u}^+_\lambda$ converges to $u_0^+$ uniformly on $M$ as $\lambda\ri 0^+$,
where $u_0^+$ is the unique forward weak KAM solution of
\begin{equation}\label{eq HJ check}\tag{B$_0$}
H(x,-d_x u)=c(H)\qquad\hbox{in $M$}
\end{equation}
such that $u_0^+\equiv 0$ on the projected Aubry set $\check\A$ associated with \eqref{eq HJ check}.
\end{theorem}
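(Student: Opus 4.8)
The plan is to obtain Theorem~\ref{222} as a translation of Theorem~\ref{l11} through the duality between \eqref{aa} and \eqref{bb}, and then to identify the limit explicitly. First I would record the elementary but crucial consequence of \cite[Proposition 2.8]{WWY2}: the involution $v\mapsto -v$ maps the set $\cS_\lambda^-$ of viscosity (equivalently, backward weak KAM) solutions of \eqref{aa} bijectively onto the set $\cS_\lambda^+$ of forward weak KAM solutions of \eqref{bb}, and likewise maps $\cS_0^-$ onto $\cS_0^+$. Passing to the extremal solutions, the supremum over $\cS_\lambda^+$ becomes minus the infimum over $\cS_\lambda^-$, so that
\begin{equation*}
u_\lambda^+=-u_\lambda^-\qquad\text{for every }\lambda\ge 0;
\end{equation*}
in particular the supremum defining $u_\lambda^+$ is attained and $u_\lambda^+\in\cS_\lambda^+$, since $u_\lambda^-\in\cS_\lambda^-$.

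Next I would identify $u_0^+$. Setting $\check H(x,p):=H(x,-p)$, the associated Tonelli Lagrangian is $\check L(x,v)=L(x,-v)$; hence the $\check L$-action of any curve equals the $L$-action of its time reversal, which gives $c(\check H)=c(H)$ and, more to the point, the equality $\check\A=\A$ of the projected Aubry sets of \eqref{eq HJ check} and \eqref{ss}. Since $u_0^-$ is by definition the viscosity solution of \eqref{ss} vanishing on $\A$, the function $-u_0^-$ is a forward weak KAM solution of \eqref{eq HJ check} vanishing on $\check\A=\A$; and uniqueness of such a solution is, again via the $\lambda=0$ duality, equivalent to the uniqueness statement for \eqref{ss}. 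Therefore $u_0^+=-u_0^-$.

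Finally, Theorem~\ref{l11} asserts $u_\lambda^-\ucv u_0^-$ on $M$ as $\lambda\to 0^+$; applying the sign change term by term yields $u_\lambda^+=-u_\lambda^-\ucv -u_0^-=u_0^+$ uniformly on $M$, which is exactly Theorem~\ref{222}. I do not expect a genuine obstacle here: the argument is a bookkeeping exercise, the only delicate points being to keep straight the forward/backward and $\pm d_xu$ conventions in \cite[Proposition 2.8]{WWY2} and to carry out the (routine) time-reversal check that $\check\A=\A$, which together guarantee that the limit supplied by Theorem~\ref{l11} is precisely the distinguished solution singled out in the statement. If instead one wanted a self-contained proof of Theorem~\ref{222} bypassing Theorem~\ref{l11}, it would run parallel to the latter: the family $\{u_\lambda^+\}_{\lambda>0}$ is equi-bounded and equi-Lipschitz, hence precompact in $C(M,\R)$; every uniform limit point as $\lambda\to 0^+$ is a forward weak KAM solution of \eqref{eq HJ check}; the hypothesis $H(x,0)\le c(H)$ forces it to vanish on $\check\A$; and uniqueness of the forward weak KAM solution with that normalization pins down the limit, so the whole family converges.
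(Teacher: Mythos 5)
Your duality bookkeeping is correct: by \cite[Proposition 2.8]{WWY2} the involution $v\mapsto -v$ maps $\cS_\lambda^-$ onto $\cS_\lambda^+$, so $u_\lambda^+=-u_\lambda^-$, and the time-reversal identity $\check L(x,v)=L(x,-v)$ gives $c(\check H)=c(H)$, $\check\A=\A$ and $u_0^+=-u_0^-$. This is exactly what the paper means when it says Theorem \ref{l11} is \emph{equivalent} to Theorem \ref{222}. But as a proof of Theorem \ref{222} your main route is circular within this paper: Theorem \ref{l11} is the companion main result, it has no independent proof you can cite (what \cite{DFIZ1} provides is only Theorem \ref{darre}, i.e.\ the convergence of the solutions $u_\lambda$ of \eqref{bb}, not of the extremal solutions $u^\pm_\lambda$), and the paper's logical order is the reverse of yours --- Section 3 proves Theorem \ref{222} directly and Theorem \ref{l11} is then read off through the very correspondence you invoke. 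So your primary argument establishes the equivalence of the two statements, not either of them.

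Your closing ``self-contained'' sketch is indeed the paper's actual strategy, but it omits precisely the nontrivial content. The compactness step requires proving equi-boundedness and equi-Lipschitz continuity of $\{u_\lambda^+\}$ (Lemma \ref{uuee}), which uses $u_\lambda^+\le u_\lambda$, the uniform bound on $u_\lambda$ from Theorem \ref{darre}, and the forward Lax--Oleinik representation \eqref{2-3}; this is routine but must be done. More seriously, the assertion that ``the hypothesis $H(x,0)\le c(H)$ forces the limit to vanish on $\check\A$'' is the heart of the theorem and is left entirely unargued. The paper's mechanism is: $u_\lambda\ge 0$ (Theorem \ref{darre}) makes $u_\lambda$ a viscosity subsolution of \eqref{eq HJ check}, hence $u_\lambda\equiv 0$ and $d_xu_\lambda\equiv 0$ on $\A$ (Corollary \ref{cor Aubry}); one then checks that the Aubry calibrated curves, along which $u_\lambda$ vanishes, remain calibrated for the discounted equation, which yields $\A\subseteq \mathcal{A}^\lambda$ (Proposition \ref{prop Aubry in Sigma}); finally $u_\lambda^+=u_\lambda$ on $\mathcal{A}^\lambda$ (Theorem \ref{teo forward sol}) gives $u_\lambda^+\equiv 0$ on $\A$, a normalization that survives the passage to the limit and, combined with the stability of viscosity solutions and the uniqueness property of $\A$ (Proposition \ref{prop Aubry}), identifies every limit point with $u_0^+$. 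Without this chain the limit is not pinned down, so the proposal as written has a genuine gap.
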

We point out that $\check \A=\A$, as we will see in Section \ref{sez generalities}.

Our analysis is based on an extension of some aspects of Aubry-Mather theory to contact Hamiltonian systems, as developed in
\cite{SWY,WWY,WWY1,WWY2},
for which discounted Hamilton-Jacobi equations serve as special models.
Other output in this vein can be found in a series of papers including \cite{CCY, CCJWY, MS, MK, WY}.

The condition that constant functions are subsolutions of the critical equation \eqref{ss},
under which the asymptotic convergence is established, is for instance satisfied whenever the Hamiltonian $H$ is
{\em reversible,} i.e. $H(x,p)=H(x,-p)$ for all $(x,p)\in T^*M$, see for instance Proposition \ref{prop reversible} below.
The model example is the mechanical Hamiltonian $H(x,p)=\frac{|p|_x^2}{2}+U(x)$.

Another example is provided by the Ma\~{n}\'{e}'s Hamiltonian $H(x,p):=\frac{1}{2}|p|^2_x+\langle p, X\rangle_x$,
where  $X:M\ri TM$ is a smooth vector field and $\langle \cdot, \cdot\rangle_x$ denotes the standard inner product in $T^*_xM$.
Clearly, constant functions are solutions of the equation \ $H(x,d_x u)=0$\ \ in $M$.

Other examples of Hamiltonians for which constant functions are critical subsolutions can be obtained in the following way:
according to \cite{BernardC11}, any $C^3$--Tonelli Hamiltonian $H$
admits a $C^{1,1}$--critical subsolution $\varphi$.
The new Hamiltonian $\tilde H(x,p):=H(x,d_x\varphi +p)$ satisfies
\ \ $\tilde H(x,0)\leq c(H)=c(\tilde H)$\ \ for every $x\in M$. In order to apply our results to $\tilde H$, we need the latter to
be of class $C^{3}$, meaning we need the existence of a critical subsolution of class $C^4$ on $M$. This is true under
proper dynamical assumptions, see \cite{BernardSmooth}, but it is not a general fact, see \cite[Appendix A]{BernardC11}.

%

%

\section{Generalities}\label{sez generalities}

In this paper, we assume the Hamiltonian $H:T^*M\to\R$ to be of class $C^3$ and to satisfy the following assumptions:

\begin{itemize}
\item[(H1)] {\bf (strict convexity)} \quad $\displaystyle\frac{\partial^2 H}{\partial p^2} (x,p)$\quad  is positive definite as a quadratic form on $T^*_x M$, for every $x\in M$;
\item[(H2)] {\bf (superlinearity)}
\[
\inf_{x\in M} \frac{H(x,p)}{|p|_x}\to +\infty\qquad\hbox{as $|p|_x\to +\infty$.}
\]
\end{itemize}
We will denote by $L:TM\to\R$ the Lagrangian associated with $H$ via the Fenchel transform. The Lagrangian $L$ is of class $C^3$ and satisfies assumptions analogous
to (H1)-(H2). We point out that the $C^3$--regularity is  only required in order to apply the results of \cite{SWY,WWY,WWY1,WWY2}.

Let $H$ be a Hamiltonian satisfying the above assumptions and let us consider an Hamilton--Jacobi equation of the form
\begin{equation}\label{eq discounted}
\lambda u+H(x,-d_x u)=a\qquad\hbox{in $M$},
\end{equation}
where $a\in\R$ and $\lambda\geq 0$. For the notion of viscosity (sub-, super-) solution of \eqref{eq discounted}, we refer to \cite{barles}.
Viscosity (sub-, super-) solutions
will be always assumed continuous in the sequel, with no further specification.
Set $\check H(x,p):=H(x,-p)$ and denote by $\check L$ the associated Lagrangian.

\subsection{Subsolutions and the critical value}
Due to conditions (H1)-(H2), the following equivalence holds, see for instance \cite{DFIZ1, FSC1, Fa12} and references therein.
\begin{proposition}
Let $\lambda\geq 0$ and $v\in\Dr{C}(M,\R)$. The following are equivalent facts:
\begin{itemize}
 \item[(i)] $v$ is a viscosity subsolution of  \eqref{eq discounted};
 \item[(ii)] $v\in \Lip(M,\R)$ and it is  an almost everywhere subsolution of \eqref{eq discounted}, i.e.
 $$
 \lambda v(x)+H(x,-d_x v)\leq a\qquad \hbox{for a.e. $x\in M$;}
 $$
 \item[(iii)] for every absolutely continuous curve $\gamma:[t_1,t_2]\to M$ we have
 \begin{equation}\label{eq dominating curve}
  e^{\lambda t_2}v(\gamma(t_2))-e^{\lambda t_1} v(\gamma(t_1)) \leq  \int_{t_1}^{t_2} e^{\lambda s}\big( \check L(\gamma(s),\dot\gamma(s))+a \big) ds.
 \end{equation}
\end{itemize}
\end{proposition}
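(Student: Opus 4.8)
The plan is to establish the cyclic chain of implications (i) $\Rightarrow$ (ii) $\Rightarrow$ (iii) $\Rightarrow$ (i), relying throughout on the Fenchel duality between $\check H$ and $\check L$ and on the coercivity hypothesis (H2); recall that $\check H(x,\cdot)$ is convex and superlinear, so $\check H(x,p)=\sup_{w\in T_xM}\big(\langle p,w\rangle-\check L(x,w)\big)$, and that $\check L$ is bounded on the unit sphere bundle of $M$ by compactness. For (i) $\Rightarrow$ (ii), I would first observe that a continuous viscosity subsolution $v$ of \eqref{eq discounted} is Lipschitz: if $\phi\in C^1$ touches $v$ from above at $x_0$, the subsolution inequality gives $H(x_0,-d_{x_0}\phi)\le a-\lambda v(x_0)\le a+\lambda\|v\|_\infty=:C$, and (H2) then bounds $|d_{x_0}\phi|_{x_0}$ by a constant $R(C)$; in coordinate charts this uniform control of the superdifferentials of $v$, together with the compactness of $M$, yields $v\in\Lip(M,\R)$. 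By Rademacher's theorem $v$ is then differentiable a.e., and since $d_x v\in D^+v(x)$ at every differentiability point, the viscosity test applied with $p=d_x v$ produces the a.e.\ inequality in (ii).

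For (ii) $\Rightarrow$ (iii), let $v\in\Lip(M,\R)$ satisfy the a.e.\ inequality and let $\gamma:[t_1,t_2]\to M$ be absolutely continuous. I would carry out the computation first for $v\in C^1$: setting $g(s):=e^{\lambda s}v(\gamma(s))$, one has $g'(s)=e^{\lambda s}\big(\lambda v(\gamma(s))+\langle d_{\gamma(s)}v,\dot\gamma(s)\rangle\big)$, and the Fenchel inequality $\langle p,w\rangle\le\check H(x,p)+\check L(x,w)$ with $p=d_{\gamma(s)}v$ and $w=\dot\gamma(s)$, combined with $\check H(\gamma(s),d_{\gamma(s)}v)=H(\gamma(s),-d_{\gamma(s)}v)\le a-\lambda v(\gamma(s))$, gives $g'(s)\le e^{\lambda s}\big(\check L(\gamma(s),\dot\gamma(s))+a\big)$; integrating over $[t_1,t_2]$ yields \eqref{eq dominating curve}. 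For a general Lipschitz $v$, I would mollify in charts via a partition of unity to obtain $v_\eps\in C^1$ with $v_\eps\to v$ uniformly and $\lambda v_\eps(x)+H(x,-d_x v_\eps)\le a+\omega(\eps)$ for some $\omega(\eps)\to 0$, apply the smooth estimate to $v_\eps$, and let $\eps\to 0^+$.

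For (iii) $\Rightarrow$ (i), applying \eqref{eq dominating curve} to short constant-speed minimizing geodesics and using the bound on $\check L$ over the unit sphere bundle shows, by a routine estimate, that $v\in\Lip(M,\R)$. To check the viscosity subsolution property at a point $x_0$, take $p\in D^+v(x_0)$ (there is nothing to verify otherwise). Given $w\in T_{x_0}M$, pick a $C^1$ curve $\gamma:[-h,0]\to M$ with $\gamma(0)=x_0$ and $\dot\gamma\equiv w$; applying \eqref{eq dominating curve} on $[-h,0]$, using $v(\gamma(-h))\le v(x_0)-h\langle p,w\rangle+o(h)$ as $h\to 0^+$ and the continuity of $\check L$, then dividing by $h$ and letting $h\to 0^+$, one obtains $\lambda v(x_0)+\langle p,w\rangle-\check L(x_0,w)\le a$. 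Taking the supremum over $w\in T_{x_0}M$ and recalling $\check H(x_0,p)=H(x_0,-p)$ gives $\lambda v(x_0)+H(x_0,-p)\le a$; applied with $p=d_{x_0}\phi$ for any $C^1$ function $\phi$ touching $v$ from above at $x_0$, this is exactly the subsolution inequality.

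The step I expect to require the most care is the passage from smooth to merely Lipschitz $v$ in (ii) $\Rightarrow$ (iii): the chain rule $\tfrac{d}{ds}v(\gamma(s))=\langle d_{\gamma(s)}v,\dot\gamma(s)\rangle$ can fail a.e.\ along a curve whose image lies in the (Lebesgue-null) non-differentiability set of $v$, so the mollification argument is essential; its only delicate point is the bound on the error $\omega(\eps)$, which follows from the uniform continuity of $H$ on the compact set $M\times\{|q|_x\le R\}$ — with $R$ a Lipschitz constant of $v$ — together with Jensen's inequality applied to the convex function $p\mapsto H(x,p)$.
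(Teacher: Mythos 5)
Your proposal is correct: the chain (i)$\Rightarrow$(ii) via coercivity and Rademacher, (ii)$\Rightarrow$(iii) via chart-wise mollification using convexity of $H$ in $p$, and (iii)$\Rightarrow$(i) via short curves, the superdifferential inequality and Fenchel duality is exactly the standard argument, and you correctly flag the only delicate point (the chain rule failing for merely Lipschitz $v$, handled by mollification). The paper itself does not prove this proposition but cites \cite{DFIZ1, FSC1, Fa12}, where the proofs follow essentially the same scheme as yours.
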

 Let $\gamma:[t_1,t_2]\to\R$ be a curve for which \eqref{eq dominating curve} holds with an equality for a subsolution $v$ of \eqref{eq discounted}.  When
$v$ is differentiable at $x=\gamma(a)$, we have that $\gamma$ is the projection on $M$ of an integral curve of the discounted flow { generated by
 \begin{align}\label{cott}\tag{DH}
        \begin{cases}
        \dot{x}=\frac{\partial {\check H  }}{\partial p}(x,p),\smallskip\\
        \dot{p}=-\frac{\partial {\check H  }}{\partial x}(x,p)- \lambda p.
         \end{cases}
\end{align}
Namely, $\gamma(t)=\pi\big(\check \Phi^{t-a}_\lambda(x,d_x v)\big)$ for all
$t\in [t_1,t_2]$, where $\pi:T^*M\to M$ denotes the standard projection and $\check \Phi_\lambda^{t}$ denotes the discounted flow generated by (\ref{cott}).}

When $\lambda>0$, equation \eqref{eq discounted} admits a unique solution for every $a\in\R$. When $\lambda=0$, on the other hand, there exists a
a unique real constant $c(\check H)$, hereafter called {\em critical}, for which the equation
\begin{equation*} 
H(x,-d_x u)=c(\check H)\qquad\hbox{in $M$}
\end{equation*}
admits viscosity solutions. Such a critical constant $c(\check H)$ is also characterized as follows:
\begin{equation}\label{def critical constant}
c(\check H)=\min\{a\in\R\,\mid\, \exists\, v\in\Lip(M,\R)\ \hbox{such that}\ \ \check H(x,d_xv)\leq a\ \ \hbox{for a.e. $x\in M$}\}.
\end{equation}
Since $v\in\Lip(M,\R)$ is an a.e. subsolution of $\check H(x,d_x v)=a$ in $M$ if and only if $-v$ is an a.e. subsolution of
$H(x,d_x v)=a$ in $M$, it is clear from \eqref{def critical constant} that $c(\check H)=c(H)$.

\subsection{Forward weak KAM solutions and minimizing sets}
Let  $\check L:TM\to\R$ denotes the Lagrangian associated with ${\check H}$ via the Fenchel transform.
 The following definition is an adaptation to the case of discounted equations of the notion of forward weak KAM solutions, first introduced by Fathi \cite{Fat-b}
for the non-discounted Hamilton-Jacobi equation and subsequently generalized in \cite[Definition 2.2]{WWY2} for fairly general contact Hamiltonian systems.

\begin{definition}\label{bwkam}
	A function $v\in C(M,\mathbb{R})$ is called a forward weak KAM solution of \eqref{bb} if
	{ \begin{itemize}
		\item [(i)] for each continuous piecewise $C^1$ curve $\gamma:[t_1,t_2]\rightarrow M$, we have
			\begin{equation}\label{cali11}
		e^{\lambda t_2}v(\gamma(t_2))-e^{\lambda t_1}v(\gamma(t_1))\leq\int_{t_1}^{t_2}  e^{\lambda s}\big( {\check L}(\gamma(s),\dot{\gamma}(s))+c(H)\big)\,ds;
		\end{equation}
		\item [(ii)] for each $x\in M$, there exists a $C^1$ curve $\gamma:[0,+\infty)\rightarrow M$ with $\gamma(0)=x$ such that
		\begin{align}\label{cali2}
		e^{\lambda t}v(\gm(t))-v(x)=\int_{0}^{t} e^{\lambda s}\big( {\check L}(\gamma(s),\dot{\gamma}(s))+c(H)\big)\,ds\qquad \hbox{for all $t>0$.}
		\end{align}
	\end{itemize}}
\end{definition}

Let $\check{L}_\lambda(x,\dot{x},u):=-\lambda u+\check{L}(x,\dot{x})$. Curves satisfying \eqref{cali2}  are called  $(v,{\check L}_\lambda,c(H))$-calibrated curves. We denote by $\mathcal{S}_\lambda^+$  the set of forward weak KAM solutions of (\ref{bb}). {Based on the forward Lax-Oleinik semigroup introduced in \cite{WWY1}, we have
\begin{equation}\label{2-3}
\check{T}^+_{t,\lambda}\varphi(x)=\sup_{\gamma(0)=x}\left\{e^{\lambda t}\varphi(\gamma(t))-\int_0^te^{\lambda s }\big( \check L(\gamma(s ),\dot{\gamma}(s ))+c(H)\big) ds \right\},
\end{equation}}

Let us denote by $u_\lambda$ the unique viscosity solution of \eqref{bb}.
 The following proposition will be employed to show uniqueness of viscosity solutions to(\ref{ex-unique88}).
\begin{proposition}[\cite{WWY2}]\label{wwyrr}
Let $v\in \cS^+_\lambda$. { The following holds:
\begin{itemize}
\item [(i)] Denote by $\mathcal{I}_v^\lambda:=\{x\in M\ |\ v(x)=u_\lambda(x)\}$. Then both $v$ and $u_{\lambda}$ are of class $C^{1,1}$ on $\mathcal{I}_v^\lambda$.
\item [(ii)] Denote by $\tilde{\mathcal{I}}_v^\lambda:=\{(x,p)\in T^*M\,\mid\, v(x)=u_{\lambda}(x),\ p=d_x v=d_x u_{\lambda}\}$. Then
$\tilde{\mathcal{I}}_v^\lambda$ is a non-empty and compact invariant set by the discounted flow $\check\Phi_\lambda^t$ generated by $\check H$. Furthermore,
if we denote by
\[
(x(t),p(t)):=\check\Phi_\lambda^t(x_0,p_0) \qquad \hbox{for all $t\in \R$}
\]
then for each $(x_0,p_0)\in \tilde{\mathcal{I}}_v^\lambda$, we have $p(t)=d_{x(t)}u_{\lambda}$ for each $t\in \R$.

\item [(iii)] Given $x_0\in M$, let $\gm:[0,+\infty)\rightarrow M$ be a $(v, \check{L}_\lambda, c(H))$-calibrated curve with $\gm(0)=x_0$.
Let $p_0:=\frac{\partial {\check L}}{\partial \dot{x}}(x_0,\dot{\gm}(0^+))$, where $\dot{\gm}(0^+)$ denotes the right derivative of
$\gm(t)$ at $t=0$. Then $d_{\gm(t)}v$ exists for every $t>0$ and
\[(
\gm(t),d_{\gm(t)}v)=\check\Phi_\lambda^t({x}_0,{p}_0).
\]
Furthermore \ \ $\omega(x_0,p_0)\subseteq \tilde{\mathcal{I}}^\lambda_{v}$,\ \ where $\omega(x_0,p_0)$ denotes the $\omega$-limit set of $(x_0,p_0)$ with respect
to the discounted flow $\check\Phi_\lambda^t$.
\end{itemize}}
\end{proposition}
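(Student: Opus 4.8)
The plan is to read off all three assertions from the two defining features of $v\in\cS^+_\lambda$ in Definition \ref{bwkam}, reinterpreted through the correspondences already recorded in the excerpt. Condition (i) of Definition \ref{bwkam} says precisely that $v$ is a viscosity subsolution of \eqref{bb}; since $\lambda>0$ makes \eqref{bb} proper, the comparison principle yields $v\le u_\lambda$ on $M$. Moreover, by the equivalence recalled before Definition \ref{bwkam}, $-v$ is a viscosity solution of \eqref{aa}, so $v$ is Lipschitz and, $H$ being convex in the momentum, locally semiconvex; likewise $u_\lambda$, a viscosity solution of \eqref{bb}, is locally semiconcave. Set $w:=u_\lambda-v\ge 0$, so that $\mathcal I_v^\lambda=\{w=0\}$. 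The single computation used repeatedly is this: along a $(v,\check L_\lambda,c(H))$-calibrated curve $\gamma$, subtracting the calibration equality \eqref{cali2} for $v$ from the domination inequality \eqref{cali11} for $u_\lambda$ gives $e^{\lambda t_2}w(\gamma(t_2))\le e^{\lambda t_1}w(\gamma(t_1))$ for $t_1\le t_2$; symmetrically, subtracting the domination of $v$ from the backward calibration of $u_\lambda$ along a backward $u_\lambda$-calibrated curve $\sigma:(-\infty,0]\to M$ gives $w(\sigma(s))\le e^{\lambda s}w(\sigma(0))$ for $s\le 0$. Since $\lambda>0$, both are genuine decay statements for $w$ along calibrated orbits.

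First I would get $\mathcal I_v^\lambda\neq\emptyset$: if $x_*$ minimises $w$ and $\gamma$ is a forward calibrated curve from $x_*$ (Definition \ref{bwkam}(ii)), then $\min w=w(x_*)\ge e^{\lambda t}w(\gamma(t))\ge e^{\lambda t}\min w$ for all $t>0$, forcing $\min w=0$ (and, in passing, $\gamma$ stays in $\mathcal I_v^\lambda$). For (i), fix $x\in\mathcal I_v^\lambda$: then $x$ is a global maximum of the semiconvex function $v-u_\lambda$, so $0\in D^+(v-u_\lambda)(x)$ and hence $v-u_\lambda$ is differentiable at $x$ with zero differential; writing $v=(v-u_\lambda)+u_\lambda$ and using $D^+u_\lambda(x)\neq\emptyset$ (semiconcavity) gives $D^+v(x)\neq\emptyset$, which combined with semiconvexity of $v$ shows that $v$, and therefore also $u_\lambda$, is differentiable at $x$ with $d_xv=d_xu_\lambda$. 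For the $C^{1,1}$ bound, semiconcavity of $u_\lambda$ together with $v\le u_\lambda$ and equality at $x$ yields $v(y)\le v(x)+\langle d_xv,y-x\rangle+\tfrac C2|y-x|^2$ near $x$, while semiconvexity of $v$ gives the matching lower bound; thus $v$ is squeezed between two quadratics with the same linear part at every point of $\mathcal I_v^\lambda$, and the standard argument then forces $x\mapsto d_xv$ to be Lipschitz on $\mathcal I_v^\lambda$.

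Next I would prove the first part of (iii). Given a forward calibrated $\gamma$ and $t>0$, the domination of $v$ over curves joining $\gamma(t-\delta)$ to a variable nearby point $z$ in time $\delta$ produces, for $\delta$ small (no conjugate points), a $C^{1,1}$ function $\Psi$ — built from the minimal discounted action with fixed initial endpoint $\gamma(t-\delta)$ — with $v\le\Psi$ near $\gamma(t)$; since $\gamma|_{[t-\delta,t]}$ realises that action by calibration, $\Psi$ touches $v$ from above at $\gamma(t)$, so $D^+v(\gamma(t))\neq\emptyset$, whence $v$ is differentiable there and, by the first-variation formula, $d_{\gamma(t)}v=\frac{\partial\check L}{\partial\dot x}(\gamma(t),\dot\gamma(t))$. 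As a calibrated curve minimises the discounted action with its endpoints fixed, it solves the discounted Euler--Lagrange equation and projects an orbit of \eqref{cott}, so $(\gamma(t),d_{\gamma(t)}v)=\check\Phi_\lambda^t(x_0,p_0)$ for $t>0$ with $p_0=\frac{\partial\check L}{\partial\dot x}(x_0,\dot\gamma(0^+))$. Then (ii) follows from the two decay estimates: a forward calibrated curve from $x\in\mathcal I_v^\lambda$ satisfies $w(\gamma(t))\le e^{-\lambda t}w(x)=0$, so it remains in $\mathcal I_v^\lambda$, giving forward invariance of $\tilde{\mathcal I}_v^\lambda$; a backward $u_\lambda$-calibrated curve from $x\in\mathcal I_v^\lambda$ (available since $u_\lambda$ is a backward weak KAM solution) remains in $\mathcal I_v^\lambda$ by the other estimate, giving backward invariance; along both, the momentum at each time is the common differential of $v$ and $u_\lambda$ at the point traversed, by (i), which is the ``furthermore'' statement. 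Compactness of $\tilde{\mathcal I}_v^\lambda$ is immediate, being the graph over the compact set $\{w=0\}$ of the continuous map $x\mapsto d_xv$.

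Finally, for the $\omega$-limit statement in (iii): along a forward calibrated curve $\gamma$ from $x_0$ one has $w(\gamma(t))\le e^{-\lambda t}w(x_0)\to 0$, so any $x_\infty=\lim_n\gamma(t_n)$ with $t_n\to\infty$ lies in $\mathcal I_v^\lambda$; since $v$ is semiconvex and differentiable at each $\gamma(t_n)$ (for $t_n>0$) and at $x_\infty$, the differentials converge, and therefore $\check\Phi_\lambda^{t_n}(x_0,p_0)=(\gamma(t_n),d_{\gamma(t_n)}v)\to(x_\infty,d_{x_\infty}v)\in\tilde{\mathcal I}_v^\lambda$ by (i)–(ii); nonemptiness of $\omega(x_0,p_0)$ itself follows from the a priori bound on minimisers of the discounted action and the dissipativity of the $-\lambda p$ term in \eqref{cott} for $\lambda>0$, which keeps the orbit precompact. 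I expect the main obstacle to be the differentiability of $v$ along calibrated curves together with the identification of its differential with the discounted Hamiltonian flow; once that and the semiconvex/semiconcave structure of $v$ and $u_\lambda$ are in place, everything else reduces to the two sign-driven monotonicity inequalities for $w$ recorded above.
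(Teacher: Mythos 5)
The paper offers no internal proof of this proposition: it is imported verbatim from \cite{WWY2} (it is one of the black boxes the note is built on), so there is no argument of the authors to compare yours against. Taken on its own, your reconstruction follows the standard weak KAM/semiconcavity route that underlies the cited result, and the skeleton is sound: $v\le u_\lambda$ by comparison (or by maximality of $u_\lambda^+$ in Theorem \ref{teo forward sol}), $u_\lambda$ semiconcave and $v$ semiconvex with linear modulus, the discounted calibration/domination subtraction giving $e^{\lambda t_2}w(\gamma(t_2))\le e^{\lambda t_1}w(\gamma(t_1))$ for $w=u_\lambda-v\ge 0$, nonemptiness at a minimum point of $w$, the paraboloid squeeze for the $C^{1,1}$ bound on $\mathcal I_v^\lambda$, differentiability of $v$ along calibrated curves via a touching action function plus semiconvexity, and the exponential decay of $w$ along forward calibrated curves for the $\omega$-limit inclusion, with the gradient convergence handled by the closure property of $D^-v$ at a point of differentiability. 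These are exactly the right ingredients, and the decay estimate is the right engine.

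Two points deserve correction or explicit justification. First, your backward estimate is misstated: from calibration of $u_\lambda$ and domination of $v$ along a backward $u_\lambda$-calibrated curve $\sigma$ one gets $e^{\lambda s}w(\sigma(s))\le w(\sigma(0))$, i.e. $w(\sigma(s))\le e^{-\lambda s}w(\sigma(0))$ for $s\le 0$, not $w(\sigma(s))\le e^{\lambda s}w(\sigma(0))$; this is growth, not decay, in the backward direction. It is harmless where you use it (at points with $w(\sigma(0))=0$ it still forces $w\equiv 0$ along $\sigma$), but as written the inequality is false and the phrase ``genuine decay'' is misleading. Second, to pass from invariance of $\mathcal I_v^\lambda$ along calibrated curves to invariance of the lifted set $\tilde{\mathcal I}_v^\lambda$ under $\check\Phi^t_\lambda$, you need to identify the momentum of the calibrated curve at its endpoint with the differential of the relevant solution there: differentiating the calibration identity at $t=0^+$ and using $\lambda v(x_0)+\check H(x_0,d_{x_0}v)=c(H)$ together with the Fenchel equality case shows $d_{x_0}v=\frac{\partial\check L}{\partial\dot x}(x_0,\dot\gamma(0^+))=p_0$, and the analogous computation for $u_\lambda$ along its backward calibrated curve; you use this identification implicitly. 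Finally, the uniform semiconvexity of $v$ (e.g. via $v=\check T^+_{t,\lambda}v$ and Tonelli estimates, or via \cite{CS89} as the paper does in Section 4), the semiconcavity of the small-time discounted action, and the ``same-slope paraboloid'' lemma giving Lipschitz gradients on $\mathcal I_v^\lambda$ are genuine lemmas that should be cited or proved rather than waved at, though all are standard. With these repairs your argument is a complete proof of the statement, independent of \cite{WWY2}.
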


Let $w$ be a Lipschitz continuous function and set
{\[
G_{w}:=\overline{\{ (x,p)\in T^*M\,\mid\,\hbox{$w$ is differentiable at $x$, $p=d_x w$}\, \}}.
\]
}
By \cite[Theorem 1.1]{WWY2}, $G_{u_\lambda}$ and $G_v$ for each $v\in \mathcal{S}_\lambda^+$ are backward and forward invariant by $\check \Phi_\lambda^{t}$,   respectively. Define
\begin{equation}\label{def sigma}
\tilde {\mathcal{A}}^\lambda:=\bigcap_{t\geq  0} \check \Phi_\lambda^{-t}\left(G_{u_\lambda}\right),\qquad {\mathcal{A}}^\lambda :=\pi\big(\tilde{\mathcal{A}}^\lambda \big),
\end{equation}
where {$\pi:T^*M\to M$ denotes the standard projection}. The sets $\tilde {\mathcal{A}}^\lambda$ and ${\mathcal{A}}^\lambda$ are called Aubry and
projected Aubry set associated with $\check{H}_\lambda-c(H)$ where $\check{H}_\lambda(x,p,u):=\lambda u+\check{H}(x,p)$, respectively. According to \cite[Theorem 1.1]{WWY2}, the set $\tilde{\mathcal{A}}^\lambda $ is invariant under
$\left(\check \Phi^t_\lambda\right)_{t\in\R}$.
According to \cite[Theorem 1.2]{WWY2}, we have the following result.
\begin{theorem}\label{teo forward sol}
{The limit $\lim_{t\rightarrow+\infty}\check{T}^+_{t,\lambda}u_\lambda$ exists. Let
\[u_\lambda^+:=\lim_{t\rightarrow+\infty}\check{T}^+_{t,\lambda}u_\lambda.\]}
Then $u_\lambda^+=\check{T}_{t,\lambda}^+u_\lambda^+$ for each $t\geq 0$. Moreover, the following holds:
\begin{itemize}
{\item[(i)] ${u}_\lambda^+$ is the maximal forward weak KAM solution of \eqref{bb};}

\item[(ii)] $u_\lambda\geq  {u}_\lambda^+$ in $M$ \ \  and\ \  ${u}_\lambda={u}_\lambda^+$ on $\mathcal{A}^\lambda $;
\item[(iii)] $\tilde{\mathcal{A}}^\lambda=\tilde{\mathcal I}_{u_\lambda^+}^\lambda$\ \  and\ \  ${\mathcal{A}}^\lambda=\mathcal{I}_{u_\lambda^+}^\lambda$.
\end{itemize}
\end{theorem}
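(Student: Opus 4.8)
The plan is to realize $u_\lambda^+$ as the long-time limit of the forward Lax--Oleinik semigroup $\check T^+_{t,\lambda}$ of \eqref{2-3} started at $u_\lambda$, and then to read off all the remaining assertions from the semigroup structure and the facts collected in Section \ref{sez generalities}. As preliminaries I would use that $\check T^+_{t,\lambda}$ is order preserving, obeys the semigroup identity $\check T^+_{t+s,\lambda}=\check T^+_{t,\lambda}\circ\check T^+_{s,\lambda}$, and is continuous for uniform convergence (indeed $\|\check T^+_{t,\lambda}\varphi-\check T^+_{t,\lambda}\psi\|_\infty\le e^{\lambda t}\|\varphi-\psi\|_\infty$, directly from \eqref{2-3}); all three follow from the sup-over-curves representation together with curve concatenation. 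Since $u_\lambda$ is in particular a viscosity subsolution of \eqref{bb}, the dominated-curve inequality \eqref{eq dominating curve} (with $a=c(H)$, $t_1=0$, $t_2=t$) gives, after taking the supremum over curves issuing from a point, $\check T^+_{t,\lambda}u_\lambda\le u_\lambda$ on $M$; combined with monotonicity and the semigroup property this shows that $t\mapsto \check T^+_{t,\lambda}u_\lambda$ is nonincreasing.

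To pass to the limit I would prove a uniform lower bound and an equi-Lipschitz bound for $\{\check T^+_{t,\lambda}u_\lambda\}_{t\ge 1}$. For the lower bound, take any $v\in\cS^+_\lambda$ (the set being nonempty because \eqref{aa} has viscosity solutions): then $v$ is a subsolution of \eqref{bb}, hence $v\le u_\lambda$ by the comparison principle for the uniquely solvable equation \eqref{bb}, and $\check T^+_{t,\lambda}v=v$ for all $t\ge 0$ (from the two conditions of Definition \ref{bwkam}), so monotonicity gives $\check T^+_{t,\lambda}u_\lambda\ge v\ge\min_M v$. The equi-Lipschitz bound is the standard regularizing effect of the forward Lax--Oleinik semigroup, which follows from (H1)--(H2) and compactness of $M$. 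A monotone, uniformly bounded, equi-Lipschitz family converges uniformly, so $u_\lambda^+:=\lim_{t\to+\infty}\check T^+_{t,\lambda}u_\lambda$ exists and is Lipschitz; letting $t\to+\infty$ in $\check T^+_{s,\lambda}(\check T^+_{t,\lambda}u_\lambda)=\check T^+_{t+s,\lambda}u_\lambda$ and using continuity of $\check T^+_{s,\lambda}$ yields the fixed-point relation $\check T^+_{s,\lambda}u_\lambda^+=u_\lambda^+$ for every $s\ge 0$. From this relation one checks that $u_\lambda^+\in\cS^+_\lambda$: condition (i) of Definition \ref{bwkam} is the fixed-point inequality rewritten along a curve (after a time shift and multiplication by $e^{\lambda t_1}$), and for condition (ii) one fixes $x$, picks for each $n$ a curve $\gamma_n:[0,n]\to M$ realizing the supremum in $\check T^+_{n,\lambda}u_\lambda^+(x)=u_\lambda^+(x)$ (maximizers exist by superlinearity and compactness), observes via the dynamic programming principle that the restriction of $\gamma_n$ to any $[0,t]\subseteq[0,n]$ realizes $\check T^+_{t,\lambda}u_\lambda^+(x)=u_\lambda^+(x)$, and extracts a locally $C^1$ limit $\gamma:[0,+\infty)\to M$ (possible since the $\gamma_n$ solve the contact Euler--Lagrange flow and have uniformly bounded initial velocities by the Lipschitz bound on $u_\lambda^+$), which is the required $(u_\lambda^+,\check L_\lambda,c(H))$-calibrated curve. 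Part (i) then follows because every $v\in\cS^+_\lambda$ satisfies $v=\check T^+_{t,\lambda}v\le\check T^+_{t,\lambda}u_\lambda\to u_\lambda^+$.

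The inequality $u_\lambda\ge u_\lambda^+$ in part (ii) is just the $t\to+\infty$ limit of $\check T^+_{t,\lambda}u_\lambda\le u_\lambda$. For the equality $u_\lambda=u_\lambda^+$ on $\mathcal A^\lambda$ and for part (iii) I would exploit the structure recalled before the statement: $G_{u_\lambda}$ is backward invariant and $G_{u_\lambda^+}$ forward invariant under $\check\Phi^t_\lambda$, and by Proposition \ref{wwyrr}(i) both $u_\lambda$ and $u_\lambda^+$ are $C^{1,1}$ on $\mathcal I^\lambda_{u_\lambda^+}$. For $\tilde{\mathcal I}^\lambda_{u_\lambda^+}\subseteq\tilde{\mathcal A}^\lambda$ I would invoke Proposition \ref{wwyrr}(ii): $\tilde{\mathcal I}^\lambda_{u_\lambda^+}$ is $\check\Phi^t_\lambda$-invariant with $p(t)=d_{x(t)}u_\lambda$ along its orbits, hence contained in $\check\Phi^{-t}_\lambda(G_{u_\lambda})$ for every $t\ge 0$, hence in $\tilde{\mathcal A}^\lambda$ by \eqref{def sigma}. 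For the reverse inclusion, a point of $\tilde{\mathcal A}^\lambda$ lies on a backward orbit inside $G_{u_\lambda}$ whose projection is backward calibrated for $u_\lambda$; using the invariance of $\tilde{\mathcal A}^\lambda$ under the whole flow together with Proposition \ref{wwyrr}(iii) on the behaviour of $\omega$-limit sets, one shows that the same curve is forward calibrated for $u_\lambda^+$, and this, combined with $u_\lambda\ge u_\lambda^+$, forces $u_\lambda=u_\lambda^+$ and $d_xu_\lambda=d_xu_\lambda^+$ along it, giving both $\tilde{\mathcal A}^\lambda\subseteq\tilde{\mathcal I}^\lambda_{u_\lambda^+}$ and, upon projecting, $u_\lambda=u_\lambda^+$ on $\mathcal A^\lambda$ and $\mathcal A^\lambda=\mathcal I^\lambda_{u_\lambda^+}$.

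I expect the last paragraph to be the genuine obstacle: constructing the two-sided calibrated orbit through a point of the Aubry set and proving that backward calibration for $u_\lambda$ and forward calibration for $u_\lambda^+$ are compatible there, so that the two functions and their differentials coincide on $\mathcal A^\lambda$. This is exactly where one needs the fine structure theory of the discounted contact flow — invariance and upper semicontinuity of $G_{u_\lambda}$ and $G_{u_\lambda^+}$, $C^{1,1}$ regularity on the contact set, control of $\omega$-limit sets — as packaged in Proposition \ref{wwyrr} and the results of \cite{WWY,WWY1,WWY2}. By contrast, the existence of the limit $u_\lambda^+$ and the fixed-point relation are soft once the lower and equi-Lipschitz bounds are available, the former being routine but requiring a little care because of the weight $e^{\lambda s}$ appearing in \eqref{2-3}.
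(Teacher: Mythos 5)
You should first be aware that the paper contains no proof of Theorem \ref{teo forward sol} at all: it is quoted verbatim from \cite[Theorem 1.2]{WWY2} (``According to [Theorem 1.2]\dots''), so there is no internal argument to compare against. Your proposal is therefore a reconstruction of the cited result, and its architecture --- monotonicity of $t\mapsto \check T^+_{t,\lambda}u_\lambda$ from $\check T^+_{t,\lambda}u_\lambda\le u_\lambda$ and the semigroup property, uniform bounds, passage to the limit, the fixed-point identity, and maximality via $v=\check T^+_{t,\lambda}v\le \check T^+_{t,\lambda}u_\lambda$ --- is the right one and is in the spirit of the source. Most of those steps are sound as sketched.

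There are, however, two genuine gaps. First, your uniform lower bound is circular: you bound $\check T^+_{t,\lambda}u_\lambda$ from below by an arbitrary $v\in\cS^+_\lambda$, but nonemptiness of $\cS^+_\lambda$ is, via \cite[Proposition 2.8]{WWY2}, exactly the existence of viscosity solutions of the negatively discounted equation \eqref{aa}, which is itself part of what \cite[Theorem 1.2]{WWY2} proves --- precisely through the limit you are constructing. A non-circular route is to use the set $\tilde{\mathcal A}^\lambda$ of \eqref{def sigma}: it is nonempty and compact (nested intersection of the compact sets $\check\Phi^{-t}_\lambda(G_{u_\lambda})$, using backward invariance of $G_{u_\lambda}$ from \cite[Theorem 1.1]{WWY2}), and its orbits project to curves along which $u_\lambda$ satisfies \eqref{eq dominating curve} with equality; this gives $\check T^+_{t,\lambda}u_\lambda=u_\lambda$ at every point of $\mathcal A^\lambda$ for all $t\ge 0$, which together with the equi-Lipschitz estimate furnishes the lower bound and, in the limit, already proves $u_\lambda=u^+_\lambda$ on $\mathcal A^\lambda$. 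Second, the hard inclusion $\tilde{\mathcal A}^\lambda\subseteq\tilde{\mathcal I}^\lambda_{u^+_\lambda}$ is only gestured at: Proposition \ref{wwyrr}(iii) concerns calibrated curves of a given forward solution $v$ and does not by itself convert calibration of the Aubry orbit for $u_\lambda$ into forward calibration for $u^+_\lambda$, so ``one shows that the same curve is forward calibrated'' is not yet an argument. A direct repair is available: from the pointwise equality above, $u^+_\lambda\le u_\lambda$ with equality on $\mathcal A^\lambda$; since $u_\lambda$ is semiconcave (it solves the convex equation \eqref{bb}) and $u^+_\lambda$ is semiconvex (because $-u^+_\lambda$ is a viscosity solution of \eqref{aa}), both are differentiable at each point of $\mathcal A^\lambda$ with equal differentials, and the fibre component of any point of $\tilde{\mathcal A}^\lambda\subseteq G_{u_\lambda}$ must coincide with that common differential, giving $\tilde{\mathcal A}^\lambda\subseteq\tilde{\mathcal I}^\lambda_{u^+_\lambda}$; the reverse inclusion via Proposition \ref{wwyrr}(ii), as you wrote it, is fine. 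With these two repairs your outline becomes a viable proof; as it stands, the existence/lower-bound step and item (iii)'s hard inclusion are not established.
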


Let $v\in\cS^+_\lambda$.
 We will call {\em Mather measure} associated with $v$ any  Borel $\check\Phi_\lambda^t$-invariant probability measures supported in $\tilde{\mathcal{I}}_v^\lambda$. We shall denote by $\mathfrak{M}_v$ the set of such measures. Since $\tilde{\mathcal{I}}_v^\lambda$ is a non-empty and compact invariant set,
{Krylov-Bogoliubov's theorem}, see \cite[\S II, Theorem I]{KB}, guarantees that $\mathfrak{M}_v$ is non-empty.
The {Mather set} associated with $v$ is given by
\[
\tilde{\mathcal{M}}_v^\lambda=\bigcup_{\mu\in \mathfrak{M}_v}\text{supp}(\mu),
\]
where $\text{supp}(\mu)$ denotes the support of $\mu$.  A classical argument shows that there exists $\nu\in \mathfrak{M}_v$ such that
$\tilde{\mathcal{M}}_v^\lambda=\text{supp}(\nu)$, yielding that $\tilde{\mathcal{M}}_v^\lambda$ is also a non-empty and compact invariant set.
Indeed, it suffices to define $\nu$ as a convex combination of a dense sequence of measures in $\mathfrak{M}_v$.  
Following \cite{WWY2}, the {Mather set} of  (\ref{cott}) is defined as
\begin{equation}\label{mmaa}
\tilde{\mathcal{M}}^\lambda:=\tilde{\mathcal{M}}_{{u}_\lambda^+}^\lambda,
\end{equation}
where $u^+_\lambda$ denotes the maximal forward weak KAM solution of \eqref{ex-unique}.
Such a Mather set $\tilde{\mathcal{M}}^\lambda$ is maximal, in the sense that
$\tilde{\mathcal{M}}_{{v}}^\lambda\subseteq \tilde{\mathcal{M}}_{{u}_\lambda^+}^\lambda$  {and $v=u_\lambda$ on ${\mathcal{M}}^\lambda_v$} for each $v\in \cS^+_\lambda$.
This is a straightforward consequence of the fact that $\tilde{\mathcal{I}}_v^\lambda\subseteq \tilde{\mathcal{I}}_{u_\lambda^+}^\lambda$ {and $v=u_\lambda$ on ${\mathcal{I}}_v^\lambda$ in view of  Proposition \ref{wwyrr}} and
Theorem \ref{teo forward sol}.\smallskip

 The following holds.

\begin{proposition}\label{asypp}
Let $v\in \cS_+$, $x_0\in M$ and $\gm:[0,+\infty)\rightarrow M$ be a $(v, \check{L}_\lambda, c(H))$-calibrated curve with $\gm(0)=x_0$.
Let   $p_0:=\frac{\partial \check{L}}{\partial \dot{x}}(x_0,\dot{\gm}(0^+))$, where $\dot{\gm}(0^+)$ denotes the right derivative of $\gm(t)$ at $t=0$.
Then
\[
\omega(x_0,p_0)\cap \tilde{\mathcal{M}}^\lambda_{v}\not=\emptyset,
\]
where $\omega(x_0,p_0)$ denotes the $\omega$-limit set of $(x_0,p_0)$ with respect
to the contact Hamiltonian flow $\check\Phi_\lambda^t$.
\end{proposition}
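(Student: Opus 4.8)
The plan is to produce a $\check\Phi_\lambda^t$-invariant probability measure carried by $\omega(x_0,p_0)$ and to observe that, thanks to Proposition \ref{wwyrr}(iii), any such measure is automatically a Mather measure associated with $v$, so that its support meets both $\omega(x_0,p_0)$ and $\tilde{\mathcal{M}}^\lambda_v$. I would start from Proposition \ref{wwyrr}(iii), which asserts that the lift of the calibrated curve is an orbit of the discounted flow, i.e. $\big(\gamma(t),d_{\gamma(t)}v\big)=\check\Phi_\lambda^t(x_0,p_0)$ for every $t>0$, and that $\omega(x_0,p_0)\subseteq\tilde{\mathcal{I}}_v^\lambda$.

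The first task is to check that $\omega(x_0,p_0)$ is a non-empty compact invariant set on which $\check\Phi_\lambda^t$ is a complete continuous flow. Non-emptiness: forward weak KAM solutions of \eqref{bb} are Lipschitz continuous --- which follows from the domination inequality \eqref{cali11} together with the superlinearity of $\check L$ --- hence $|d_{\gamma(t)}v|_{\gamma(t)}\leq \Lip(v)$ and the positive semiorbit $\{\check\Phi_\lambda^t(x_0,p_0)\,:\,t\geq 1\}$ is confined to the compact subset $\{(x,p)\in T^*M\,:\,|p|_x\leq \Lip(v)\}$ of $T^*M$, so it accumulates somewhere as $t\to +\infty$. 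Invariance: $\omega(x_0,p_0)$ is the $\omega$-limit set of a precompact orbit, hence invariant under the flow; and since $\omega(x_0,p_0)\subseteq\tilde{\mathcal{I}}_v^\lambda$, which by Proposition \ref{wwyrr}(ii) is a compact set invariant under $\big(\check\Phi_\lambda^t\big)_{t\in\R}$, the restriction of $\check\Phi_\lambda^t$ to $\omega(x_0,p_0)$ is a genuine continuous flow.

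Then I would apply the Krylov--Bogoliubov theorem \cite[\S II, Theorem I]{KB} to $\check\Phi_\lambda^t$ on the non-empty compact invariant set $\omega(x_0,p_0)$, obtaining a Borel $\check\Phi_\lambda^t$-invariant probability measure $\mu$ with $\mathrm{supp}(\mu)\subseteq\omega(x_0,p_0)$. Since $\omega(x_0,p_0)\subseteq\tilde{\mathcal{I}}_v^\lambda$, the measure $\mu$ is supported in $\tilde{\mathcal{I}}_v^\lambda$, so $\mu\in\mathfrak{M}_v$ by the very definition of the class of Mather measures associated with $v$, and therefore $\mathrm{supp}(\mu)\subseteq\tilde{\mathcal{M}}^\lambda_v=\bigcup_{\nu\in\mathfrak{M}_v}\mathrm{supp}(\nu)$. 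Combining the two inclusions gives $\emptyset\neq\mathrm{supp}(\mu)\subseteq\omega(x_0,p_0)\cap\tilde{\mathcal{M}}^\lambda_v$, as claimed. Essentially everything here is bookkeeping resting on Proposition \ref{wwyrr}(iii); the only step calling for a little care --- and the one I would regard as the main, if mild, obstacle --- is the non-emptiness of $\omega(x_0,p_0)$, which requires confining the lifted orbit to a compact region of $T^*M$ via the a priori Lipschitz bound on $v$ (equivalently, a uniform speed bound on the calibrated curve $\gamma$), together with the harmless check that $\check\Phi_\lambda^t$ restricts to a complete flow on the compact invariant set we work on so that Krylov--Bogoliubov applies.
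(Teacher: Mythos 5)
Your proof is correct and follows essentially the same route as the paper: both use Proposition \ref{wwyrr}(iii) to place the non-empty, compact, invariant set $\omega(x_0,p_0)$ inside $\tilde{\mathcal{I}}_v^\lambda$, apply Krylov--Bogoliubov there to get an invariant probability measure, and conclude via the definition of $\tilde{\mathcal{M}}_v^\lambda$ as the union of supports of Mather measures; the paper merely phrases this as a contradiction argument, while you state it directly, and your extra justification of the non-emptiness and compactness of the $\omega$-limit set is a harmless elaboration of what the paper attributes to Proposition \ref{wwyrr}.
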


\begin{proof}
Let us assume by contradiction that\ \   $\omega(x_0,p_0)\cap \tilde{\mathcal{M}}_v^\lambda=\emptyset$.\
Since $\omega(x_0,p_0)$  is a non-empty, compact and invariant subset of $\tilde{\mathcal{I}}_v^\lambda$, see Proposition \ref{wwyrr},
by applying Krylov-Bogoliubov's theorem \cite[\S II, Theorem I]{KB} we would find a new Mather measure supported on $\omega(x_0,p_0)$, in contradiction with the
very definition of $\tilde{\mathcal{M}}_v^\lambda$.
\end{proof}

As a consequence, we derive the following result.

\begin{proposition}\label{copp}
Let $v_1$, $v_2$ be forward weak KAM solutions of \eqref{bb}.
If  $v_1|_\mathcal{O}= v_2|_\mathcal{O}$ ,
where  $\mathcal{O}$ denotes a neighborhood of $\mathcal{M}_{v_1}$, then $v_1 \leq v_2$ in $M$.
\end{proposition}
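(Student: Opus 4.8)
The plan is a comparison argument along a calibrated curve, using Proposition \ref{asypp} to drive that curve into the neighborhood $\mathcal{O}$. Fix an arbitrary $x\in M$. Since $v_1\in\cS^+_\lambda$, Definition \ref{bwkam}(ii) provides a $C^1$ curve $\gm:[0,+\infty)\to M$ with $\gm(0)=x$ that is $(v_1,\check{L}_\lambda,c(H))$-calibrated, i.e.
\[
e^{\lambda t}v_1(\gm(t))-v_1(x)=\int_0^t e^{\lambda s}\big(\check L(\gm(s),\dot\gm(s))+c(H)\big)\,ds\qquad\text{for every }t>0 .
\]
Setting $p_0:=\frac{\partial\check L}{\partial\dot x}(x,\dot\gm(0^+))$, Proposition \ref{wwyrr}(iii) (applied with $v=v_1$) tells us that the orbit $t\mapsto\check\Phi_\lambda^t(x,p_0)$ projects onto $\gm$ on $(0,+\infty)$, namely $\gm(t)=\pi\big(\check\Phi_\lambda^t(x,p_0)\big)$ for $t>0$.

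Next I invoke Proposition \ref{asypp} for $v_1$: there exists $(\bar x,\bar p)\in\omega(x,p_0)\cap\tilde{\mathcal M}^\lambda_{v_1}$, and in particular $\bar x\in\mathcal{M}_{v_1}=\pi(\tilde{\mathcal M}^\lambda_{v_1})$. Pick a sequence $t_n\to+\infty$ with $\check\Phi_\lambda^{t_n}(x,p_0)\to(\bar x,\bar p)$; projecting via $\pi$ gives $\gm(t_n)\to\bar x\in\mathcal{M}_{v_1}$. Since $\mathcal{O}$ is a neighborhood of $\mathcal{M}_{v_1}$, it follows that $\gm(t_n)\in\mathcal{O}$ for all $n$ large, hence $v_1(\gm(t_n))=v_2(\gm(t_n))$ for all such $n$.

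To conclude, I apply the subsolution inequality \eqref{cali11} for $v_2\in\cS^+_\lambda$ along $\gm$ over $[0,t_n]$ and subtract from it the calibration identity above for $v_1$; the exponentially weighted integrals cancel exactly and we are left with
\[
v_1(x)-v_2(x)\le e^{\lambda t_n}\big(v_1(\gm(t_n))-v_2(\gm(t_n))\big)=0
\]
for $n$ large, so $v_1(x)\le v_2(x)$. As $x\in M$ was arbitrary, this gives $v_1\le v_2$ on $M$. I do not anticipate a genuine obstacle: the only point requiring care is that one must use the \emph{equality} in the calibration \eqref{cali2} for $v_1$ — not merely the domination inequality — so that the weighted integrals cancel, and note that the surviving factor $e^{\lambda t_n}$, which blows up as $n\to\infty$ when $\lambda>0$, multiplies a quantity that is identically zero once $\gm(t_n)\in\mathcal{O}$, and is therefore harmless.
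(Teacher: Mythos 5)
Your proof is correct and follows essentially the same route as the paper: a calibrated curve for $v_1$ driven into $\mathcal O$ by Proposition \ref{asypp}, then comparison with $v_2$ along that curve; the only cosmetic difference is that you invoke the domination inequality \eqref{cali11} directly where the paper uses $v_2=\check T^+_{t,\lambda}v_2$ together with \eqref{2-3}, which amounts to the same estimate.
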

\begin{proof}
Pick $x_0\in M$ and let $\gm:[0,+\infty)\rightarrow M$ be a $(v_1 , \check{L}_\lambda, c(H))$-calibrated curve with $\gm(0)=x_0$.
Let   $p_0:=\frac{\partial \check{L}}{\partial \dot{x}}(x_0,\dot{\gm}(0^+))$, where $\dot{\gm}(0^+)$ denotes the right derivative of $\gm(t)$ at $t=0$.

Based on Proposition \ref{asypp}, there exists a $t>0$ such that $\gamma(t)\in\mathcal O$.
From the hypothesis that $v_1 =v_2$ on $\mathcal{O}$ we infer that $v_1(\gamma(t))=v_2(\gamma(t))$.
From the fact that $v_i=T^+_{t,\lambda}v_i$ on $M$ for $i\in\{1,2\}$ and \eqref{2-3} we get
\[
e^{\lambda t} v_1(\gamma(t))-v_1(x_0)
=
\int_0^t e^{\lambda s }\big( \check L(\gamma(s ),\dot{\gamma}(s ))+c(H)\big) ds
\geq
e^{\lambda t} v_2(\gamma(t))-v_2(x_0),
\]
yielding $v_1(x_0)\leq v_2(x_0)$ since $v_1(\gamma(t))=v_2(\gamma(t))$. The assertion follows since $x_0$ was arbitrarily chosen in $M$.
\end{proof}

\subsection{Viscosity solutions of \eqref{eq HJ check}}
Viscosity solutions to equation \eqref{eq HJ check} are not unique, even up to additive constants in general.
A uniqueness set for equation \eqref{eq HJ check} is given by the so-called {\em projected Aubry set} $\check \A$: it {is} a closed subset of $M$ that can be
characterized by the following property, see \cite{FSC1}:
\begin{equation} \label{def Aubry}\tag{$\A$}
y\in \check\A \qquad\hbox{iff}\qquad \hbox{any subsolution $v$ of \eqref{eq HJ check} is differentiable at $y$.}
\end{equation}
Since $v$ is a subsolution of \eqref{eq HJ check} if and only if $-v$ is a subsolution of \eqref{ss},
we easily derive that $\check \A=\A$, where $\A$ is the projected Aubry set associated with equation \eqref{ss}.
In the sequel, we will always write $\A$ in place of $\check \A$. The following properties hold, see for instance \cite{FS05, FSC1, Fat-b}:\\

\begin{proposition}\label{prop Aubry}
\
\begin{itemize}
\item[(i)] Let $y\in\A$. Then \ $H(y,-d_{y}v)=c(H)$\ \ and\ \  $d_y v=d_y w$ \quad for any pair $v,w$ of subsolutions  to \eqref{eq HJ check}.
\item[(ii)] Let $y\in\A$. Then there exists a unique curve $\gamma:\R\to \A\subseteq M$ with $\gamma(0)=y$ such that
\[
v(\gamma(b))-v(\gamma(a))=\int_a^b \big( \check L(\gamma(s),\dot\gamma(s))+c(H) \big) ds\qquad\hbox{for every $a<b$,}
\]
for any subsolution $v$ of \eqref{eq HJ check}.
\item[(iii)] Let $u,v$ be viscosity solutions of \eqref{eq HJ check}. If $u=v$ on $\A$, then $u=v$ on $M$.
\end{itemize}
\end{proposition}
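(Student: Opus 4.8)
These are classical facts of weak KAM and Aubry--Mather theory, and the plan is to organize them around the structure of the Aubry set rather than to reprove that structure from scratch. First, I would observe that $\check H$ is itself a Tonelli Hamiltonian, so the classical theory applies to the pair $(\check H,\check L)$ directly; alternatively one can transfer everything from equation \eqref{ss}, using that $v$ is a subsolution of \eqref{eq HJ check} iff $-v$ is a subsolution of \eqref{ss}, that $\check L(x,\xi)=L(x,-\xi)$ (so $s\mapsto\sigma(s)$ is $(v,\check L,c(H))$--calibrated iff $s\mapsto\sigma(-s)$ is $(-v,L,c(H))$--calibrated), and that $\check\A=\A$. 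The tools I would quote from \cite{FS05,FSC1,Fat-b} are: the Aubry set $\tilde\A\subseteq T^*M$ attached to $\check H-c(H)$ is non-empty, invariant under the Hamiltonian flow $\check\Phi_0^t$ of $\check H$, and projects \emph{bijectively} onto $\A$; for every $(y,p)\in\tilde\A$ the curve $t\mapsto\pi\big(\check\Phi_0^t(y,p)\big)$ is $(v,\check L,c(H))$--calibrated for \emph{every} critical subsolution $v$ of \eqref{eq HJ check}; and calibrated curves are $C^1$ Euler--Lagrange extremals along which any critical subsolution is differentiable, with gradient $\frac{\partial\check L}{\partial\dot x}(\gamma,\dot\gamma)$.

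For part (ii), fix $y\in\A$, let $p_y$ be the unique covector with $(y,p_y)\in\tilde\A$, and set $\gamma(t):=\pi\big(\check\Phi_0^t(y,p_y)\big)$. Then $\gamma(0)=y$, $\gamma(\R)\subseteq\pi(\tilde\A)=\A$, and $\gamma$ is calibrated for every critical subsolution by the quoted property of $\tilde\A$. For uniqueness, suppose $\gamma_1,\gamma_2:\R\to\A$ have $\gamma_i(0)=y$ and are both calibrated for every critical subsolution; fix one critical subsolution $v$, which is differentiable at $y$ since $y\in\A$ (see \eqref{def Aubry}). Along a calibration the Fenchel inequality $\langle d_{\gamma_i(t)}v,\dot\gamma_i(t)\rangle\leq \check L(\gamma_i,\dot\gamma_i)+\check H(\gamma_i,d_{\gamma_i}v)\leq \check L(\gamma_i,\dot\gamma_i)+c(H)$ must be an equality (the extremes coincide by differentiating the calibration identity), which forces $\dot\gamma_i(t)=\frac{\partial\check H}{\partial p}(\gamma_i(t),d_{\gamma_i(t)}v)$; at $t=0$ this gives $\dot\gamma_1(0)=\frac{\partial\check H}{\partial p}(y,d_yv)=\dot\gamma_2(0)$. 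Since calibrated curves solve the Euler--Lagrange equation, uniqueness for that flow yields $\gamma_1\equiv\gamma_2$.

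Part (i) then follows almost for free: let $\gamma$ be the curve from (ii) through $y$. Every critical subsolution $v$ of \eqref{eq HJ check} is calibrated along $\gamma$, hence differentiable at $y$ with $d_yv=\frac{\partial\check L}{\partial\dot x}(y,\dot\gamma(0))$, and the right-hand side does not depend on $v$; this gives $d_yv=d_yw$ for any two subsolutions $v,w$. Reading the Fenchel equality of the previous paragraph at $t=0$ gives $\check H(y,d_yv)=c(H)$, i.e.\ $H(y,-d_yv)=c(H)$. (Alternatively, (i) can be obtained directly from \eqref{def Aubry} and the strict convexity (H1): if two subsolutions had different gradients at $y$, their half-sum would be a critical subsolution that is \emph{strict} at $y$; subtracting from it a small multiple of $x\mapsto d(x,y)$ on a small ball and gluing would produce a critical subsolution that fails to be differentiable at $y$, contradicting $y\in\A$, and the same perturbation rules out $H(y,-d_yv)<c(H)$.)

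For part (iii), let $u,v$ be viscosity solutions of \eqref{eq HJ check} agreeing on $\A$. The slickest route is the representation formula $u(x)=\min_{z\in\A}\big(u(z)+\check h(z,x)\big)$, valid for every viscosity solution of \eqref{eq HJ check} with $\check h$ the Peierls barrier of $\check L-c(H)$: since $u=v$ on $\A$, the right-hand side is the same for both, so $u=v$ on $M$. Equivalently, one can argue dynamically: $u$ being a weak KAM solution, through each $x\in M$ there is a one-sided $u$--calibrated extremal whose limit set projects into $\A$, and combining the calibration identity for $u$ with the domination inequality \eqref{eq dominating curve} (with $\lambda=0$, $a=c(H)$) for $v$ along this curve gives $(u-v)(x)\geq(u-v)(z)$ for a point $z\in\A$, hence $u\geq v$, and the reverse inequality by symmetry. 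The only genuinely non-bookkeeping step, and the one I expect to be the main obstacle, is the package of facts about $\tilde\A$ used at the start of (ii) — non-emptiness, flow-invariance, bijective projection onto $\A$, and calibration of all critical subsolutions along its orbits — for which I would rely on \cite{FS05,FSC1,Fat-b} rather than reprove them here.
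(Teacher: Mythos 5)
The paper offers no proof of this proposition: it is stated as a classical fact with a pointer to \cite{FS05, FSC1, Fat-b}, and your write-up is a correct assembly of exactly that classical package (invariance and graph property of the Aubry set $\tilde\A$ for $\check H-c(H)$, calibration of every critical subsolution along its orbits, differentiability of subsolutions on $\A$, and comparison/representation on the Aubry set), so it matches the paper's treatment in substance while adding detail. The derivations you give for (i)--(iii) from that package (Fenchel equality at $t=0$, Euler--Lagrange uniqueness for (ii), Peierls-barrier representation or the dynamical comparison along backward calibrated curves for (iii)) are sound, and you correctly defer the non-elementary structural inputs to the same references the paper cites.
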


We end this section by recalling the following result proved in \cite{DFIZ1}, see Propositions 1.4 and 4.4 therein.

\begin{theorem}[\cite{DFIZ1}]\label{darre}
{ For each $\lambda>0$, let $u_\lambda$ be the viscosity solution of \eqref{bb}}. Let us assume that $H(x,0)\leq  c(H)$ for every $x\in M$. Then $u_\lambda\geq  0$ in $M$
for every $\lambda>0$ and $u_\lambda \nearrow u_0$ uniformly on $M$ as $\lambda\searrow 0$, where $u_0$ is the unique viscosity solution of \eqref{eq HJ check}
such that $u_0\equiv 0$ on $\A$.
\end{theorem}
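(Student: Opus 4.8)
The plan is to run everything through the comparison principle, which holds for \eqref{bb} (and more generally for \eqref{eq discounted}) as soon as $\lambda>0$: any viscosity subsolution lies below any viscosity supersolution. First I would establish the two-sided bound $0\le u_\lambda\le\Psi$ with $\Psi$ independent of $\lambda$. The hypothesis $H(x,0)\le c(H)$ says exactly that the constant $0$ is a viscosity subsolution of \eqref{eq HJ check}, hence also of \eqref{bb} (the term $\lambda\cdot 0$ vanishes), so comparison with $u_\lambda$ gives $u_\lambda\ge 0$ on $M$. For the upper bound I would fix a viscosity solution $w$ of \eqref{eq HJ check} and set $\Psi:=w-\min_M w\ge 0$; since \eqref{eq HJ check} does not involve the unknown, $\Psi$ is again a solution of it, and because $\Psi\ge 0$ and $\lambda>0$ it is a supersolution of \eqref{bb}, whence $u_\lambda\le\Psi$ for every $\lambda>0$.

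Next I would prove monotonicity by the same device: for $0<\lambda_1<\lambda_2$, the inequality $\lambda_1 u_{\lambda_2}+H(x,-d_xu_{\lambda_2})\le\lambda_2 u_{\lambda_2}+H(x,-d_xu_{\lambda_2})=c(H)$, valid since $u_{\lambda_2}\ge 0$, shows that $u_{\lambda_2}$ is a subsolution of \eqref{bb} with parameter $\lambda_1$, so comparison yields $u_{\lambda_2}\le u_{\lambda_1}$; thus $u_\lambda$ increases as $\lambda\searrow 0$. To make the convergence uniform I would observe that $u_\lambda\ge 0$ turns the a.e.\ form of the equation into $H(x,-d_xu_\lambda)\le c(H)$ a.e.\ on $M$, so by superlinearity (H2) the sets $\{p:\ H(x,p)\le c(H)\}$ are uniformly bounded and $\{u_\lambda\}$ is equi-Lipschitz. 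A monotone, equi-Lipschitz, uniformly bounded family on the compact $M$ converges, as $\lambda\searrow 0$, uniformly to some $\bar u\in\Lip(M,\R)$.

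It then remains to identify $\bar u$ with $u_0$. Rewriting \eqref{bb} as $H(x,-d_xu_\lambda)=c(H)-\lambda u_\lambda$ and using $0\le\lambda u_\lambda\le\lambda\max_M\Psi\to 0$ uniformly, the standard stability of viscosity solutions shows $\bar u$ is a viscosity solution of \eqref{eq HJ check}; by Proposition \ref{prop Aubry}(iii) it is enough to check $\bar u\equiv 0$ on $\A$, and in fact I would prove the stronger statement that $u_\lambda\equiv 0$ on $\A$ for every $\lambda>0$. Given $y\in\A$, let $\gamma:\R\to\A$ be the curve of Proposition \ref{prop Aubry}(ii) with $\gamma(0)=y$; applying that identity to the critical subsolution $v\equiv 0$ gives $\int_a^b\big(\check L(\gamma(s),\dot\gamma(s))+c(H)\big)\,ds=0$ for all $a<b$, hence $\check L(\gamma(s),\dot\gamma(s))+c(H)=0$ for a.e.\ $s$. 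Plugging $\gamma|_{[-T,0]}$ into the subsolution characterization \eqref{eq dominating curve} of $u_\lambda$ (with $a=c(H)$) gives $u_\lambda(y)\le e^{-\lambda T}u_\lambda(\gamma(-T))\le e^{-\lambda T}\max_M\Psi$, and letting $T\to+\infty$ yields $u_\lambda(y)\le 0$; combined with $u_\lambda\ge 0$ this forces $u_\lambda(y)=0$. Letting $\lambda\searrow 0$ gives $\bar u\equiv 0$ on $\A$, so $\bar u=u_0$ by Proposition \ref{prop Aubry}(iii).

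I expect the only non-formal step to be this last identification on the Aubry set: it is where the dynamical content enters, via the existence of a $c(H)$-calibrated curve for $\check L$ staying inside $\A$ (Proposition \ref{prop Aubry}(ii)); everything else is comparison plus routine compactness and stability. A subsidiary point worth flagging is that the $\lambda$-independent upper bound $\Psi$ is precisely what makes $\lambda u_\lambda\to 0$ uniform, and this is used both in the stability step and in the estimate $e^{-\lambda T}u_\lambda(\gamma(-T))\to 0$.
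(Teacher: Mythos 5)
Your argument is correct, but note that this statement is not proved in the paper at all: Theorem \ref{darre} is recalled verbatim from \cite{DFIZ1} (Propositions 1.4 and 4.4 therein), so there is no internal proof to match. Your route is a clean, self-contained alternative: the bounds $0\le u_\lambda\le \Psi$, the monotonicity in $\lambda$, the equi-Lipschitz estimate from coercivity, and the stability passage to the limit are all standard comparison-principle steps (comparison for \eqref{bb} with $\lambda>0$ does hold in this convex, coercive, compact setting), and the only genuinely dynamical input is your last step: using the curve of Proposition \ref{prop Aubry}(ii), calibrated for \emph{every} critical subsolution and in particular for $v\equiv 0$ (admissible precisely because $H(x,0)\le c(H)$), in the dominated-curve characterization \eqref{eq dominating curve} with the discount factor, to get $u_\lambda(y)\le e^{-\lambda T}u_\lambda(\gamma(-T))\to 0$ and hence $u_\lambda\equiv 0$ on $\A$; this also re-derives Corollary \ref{cor Aubry}(i) directly rather than as a consequence of the theorem, so there is no circularity. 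By contrast, the proof in \cite{DFIZ1} goes through the value-function (Lax--Oleinik) representation of the discounted solution and the characterization of the limit via Mather measures, which yields the stronger unconditional convergence result of which this theorem is the special case when constants are subsolutions; your approach trades that machinery for the comparison principle plus the single calibrated curve in the Aubry set, which is enough under the standing hypothesis. One small point worth making explicit if you write this up: the uniqueness of $u_0$ among solutions of \eqref{eq HJ check} vanishing on $\A$ is exactly Proposition \ref{prop Aubry}(iii), and your limit $\bar u$ furnishes the existence of such a solution, so the statement of the theorem is fully recovered.
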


As a corollary we infer

\begin{corollary}\label{cor Aubry}
For each $\lambda>0$, the function $u_\lambda$ is a viscosity subsolution of \eqref{eq HJ check}. In particular
\begin{itemize}
 \item[(i)] $u_\lambda\equiv 0$ on $\A$;
 \item[(ii)] $u_\lambda$ is differentiable on $\A$ and $d_x u_\lambda\equiv 0$ on $\A$.
\end{itemize}
\end{corollary}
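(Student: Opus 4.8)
The plan is to derive everything from Theorem \ref{darre} together with the characterization of the critical value and the properties of the projected Aubry set in Proposition \ref{prop Aubry}. The first step is to show that each $u_\lambda$ ($\lambda>0$) is a viscosity subsolution of \eqref{eq HJ check}. Since $u_\lambda$ solves $\lambda u_\lambda + \check H(x,d_x u_\lambda)=c(H)$ in the viscosity sense, it is in particular Lipschitz continuous, hence differentiable a.e., and at every differentiability point $x$ we have $\check H(x,d_x u_\lambda)=c(H)-\lambda u_\lambda(x)\le c(H)$, because $u_\lambda\ge 0$ in $M$ by Theorem \ref{darre} and $\lambda>0$. By the equivalence between a.e. subsolutions and viscosity subsolutions for Tonelli Hamiltonians (the proposition in Section \ref{sez generalities} applied with $\lambda=0$, $a=c(H)$), this shows $u_\lambda$ is a viscosity subsolution of \eqref{eq HJ check}.

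Once $u_\lambda$ is known to be a critical subsolution, item (i) is immediate from Theorem \ref{darre}: that theorem gives $0\le u_\lambda\le u_0$ on $M$, and $u_0\equiv 0$ on $\A$ by the defining property of $u_0$; hence $u_\lambda\equiv 0$ on $\A$. For item (ii), I would invoke Proposition \ref{prop Aubry}(i): at any $y\in\A$ every subsolution of \eqref{eq HJ check} is differentiable, so in particular $u_\lambda$ is differentiable at $y$, and moreover $d_y v$ is the same for all subsolutions $v$. Since constant functions are subsolutions of \eqref{eq HJ check} under the standing assumption $H(x,0)\le c(H)$ — equivalently $\check H(x,0)\le c(H)$ — and a constant has zero differential, we conclude $d_y u_\lambda = 0$ for every $y\in\A$.

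I do not anticipate a serious obstacle here; the corollary is essentially a bookkeeping consequence of Theorem \ref{darre} and Proposition \ref{prop Aubry}. The one point that needs a little care is justifying the passage from "a.e. subsolution" to "viscosity subsolution" for $u_\lambda$ with respect to the critical equation rather than the discounted one — but this is exactly the content of the equivalence $(i)\Leftrightarrow(ii)$ in the first proposition of Section \ref{sez generalities}, specialized to $\lambda=0$ and $a=c(H)$, so it may simply be cited. It is also worth noting explicitly that the hypothesis of Corollary \ref{cor Aubry} inherits the standing assumption $H(x,0)\le c(H)$ from Theorem \ref{darre}, which is what makes constant functions available as comparison subsolutions in the argument for (ii).
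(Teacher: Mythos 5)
Your proposal is correct and follows essentially the same route as the paper: the subsolution property comes from $u_\lambda\geq 0$ (the paper passes the inequality $\check H(x,d_xu_\lambda)\leq \lambda u_\lambda+\check H(x,d_xu_\lambda)=c(H)$ directly in the viscosity sense, while you detour through the a.e.\ characterization, which is an equivalent justification), item (i) from $0\leq u_\lambda\leq u_0$ and $u_0\equiv 0$ on $\A$, and item (ii) from Proposition \ref{prop Aubry} together with the fact that constants are subsolutions of \eqref{eq HJ check} under the standing assumption $H(x,0)\leq c(H)$. No gaps.
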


\begin{proof}
From the fact that $u_\lambda\geq 0$ in $M$ we get
\[
H(x,-d_x u_\lambda) \leq \lambda u_\lambda+H(x,-d_x u_\lambda)=c(H)\qquad\hbox{in $M$}
\]
in the viscosity sense, namely $u_\lambda$ is a viscosity subsolution of \eqref{eq HJ check}. Item (i) follows from the inequality
\ $0\leq u_\lambda\leq u_0$ on $M$ and the fact that $u_0\equiv 0$ on $\A$, while item (ii) follows directly from Proposition \ref{prop Aubry}
and the fact that any constant function is a subsolution of \eqref{eq HJ check}.
\end{proof}
%
%
%

%
\section{Asymptotic convergence}

This section is devoted to the proof of Theorem \ref{222}. We prove some preliminary results first.

\begin{lemma}\label{uuee}
The family $\{u^+_\lambda\}_{\lambda\in (0,1]}$ is {equi-bounded} and equi-Lipschitz continuous.
\end{lemma}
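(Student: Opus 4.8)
The plan is to establish equi-boundedness of $\{u_\lambda^+\}_{\lambda\in(0,1]}$ first, and then to deduce the equi-Lipschitz bound almost automatically from the superlinearity of $H$. The upper bound is immediate from results already at hand: by Theorem~\ref{teo forward sol}(ii) one has $u_\lambda^+\le u_\lambda$ on $M$, and by Theorem~\ref{darre} (which uses the standing hypothesis $H(x,0)\le c(H)$) one has $u_\lambda\le u_0$ on $M$; hence $\sup_M u_\lambda^+\le\max_M u_0$, a bound independent of $\lambda$.

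The delicate point --- and the step I expect to be the main obstacle --- is the uniform lower bound. It cannot be obtained by viscosity comparison: under the correspondence of Section~\ref{sez generalities}, $u_\lambda^+$ is minus the \emph{minimal} solution of \eqref{aa}, an equation whose zero-order term has the sign that destroys the comparison principle, so that minimal solution has no a priori upper bound coming from sub/supersolutions. Instead I would exploit the calibration structure of forward weak KAM solutions, keeping the time horizon short so that the exponential weights $e^{\lambda s}$ cannot blow up. Fix any $y_\lambda\in\A^\lambda$; this set is nonempty because $\tilde{\A}^{\lambda}=\tilde{\I}_{u_\lambda^+}^\lambda$ is nonempty and compact (Proposition~\ref{wwyrr}(ii) and Theorem~\ref{teo forward sol}(iii)), and on it $u_\lambda^+=u_\lambda\ge0$ by Theorem~\ref{teo forward sol}(ii) and Theorem~\ref{darre}. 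Given $x\in M$, take a minimizing geodesic $\sigma\colon[0,1]\to M$ with $\sigma(0)=x$ and $\sigma(1)=y_\lambda$, so that $|\dot\sigma(s)|_{\sigma(s)}\equiv d_M(x,y_\lambda)\le\mathrm{diam}(M)$ for every $s$. Applying condition (i) of Definition~\ref{bwkam} to $u_\lambda^+$ along $\sigma$ gives
\[
u_\lambda^+(x)\ \ge\ e^{\lambda}u_\lambda^+(y_\lambda)-\int_0^1 e^{\lambda s}\bigl(\check L(\sigma(s),\dot\sigma(s))+c(H)\bigr)\,ds .
\]
Since $u_\lambda^+(y_\lambda)\ge0$, since $e^{\lambda s}\le e$ for $s\in[0,1]$ and $\lambda\in(0,1]$, and since $\check L(\sigma(s),\dot\sigma(s))$ stays in a fixed compact set depending only on $\mathrm{diam}(M)$, the right-hand side is at least $-K$ for a constant $K$ depending only on $\mathrm{diam}(M)$, $c(H)$ and $\check L$ --- hence independent of $x$ and $\lambda$. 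Combined with the upper bound, this gives $\|u_\lambda^+\|_\infty\le C_1$ for all $\lambda\in(0,1]$.

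It then remains to get the equi-Lipschitz bound, which is routine. Being a forward weak KAM solution of \eqref{bb}, $u_\lambda^+$ is in particular a Lipschitz viscosity subsolution of \eqref{bb}, so $H(x,-d_xu_\lambda^+)\le c(H)-\lambda u_\lambda^+(x)\le c(H)+C_1$ for a.e.\ $x\in M$ and all $\lambda\in(0,1]$. By the superlinearity assumption (H2) the sublevel set $\{(x,p)\in T^*M:H(x,p)\le c(H)+C_1\}$ is contained in $\{|p|_x\le R\}$ for some finite $R$ depending only on $C_1$; therefore $|d_xu_\lambda^+|_x\le R$ a.e., which forces each $u_\lambda^+$ to be $R$-Lipschitz.
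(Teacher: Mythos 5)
Your argument is correct, and for the equi-boundedness it is essentially the paper's own: the upper bound via $u_\lambda^+\le u_\lambda$ and Theorem \ref{darre}, and the lower bound by running the domination inequality along a bounded-speed geodesic from $x$ to a point where $u_\lambda^+$ agrees with $u_\lambda$ (you take $y_\lambda\in\A^\lambda=\mathcal I^\lambda_{u_\lambda^+}$, the paper takes $x_0\in\mathcal I^\lambda_{u_\lambda^+}$ and phrases the estimate through $u^+_\lambda=\check T^+_{1,\lambda}u^+_\lambda$ and \eqref{2-3}; these are the same computation, since condition (i) of Definition \ref{bwkam} applied to a curve emanating from $x$ is precisely the inequality furnished by the sup in \eqref{2-3}). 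Where you genuinely diverge is the equi-Lipschitz step: the paper stays with the calibration inequality \eqref{cali11} along a geodesic joining $x$ and $y$ and, using the sup-bound $K$, extracts the Lipschitz constant $C''+K$ with $C''=\sup\{\check L(x,\dot x)+c(H):\ \|\dot x\|_x=1\}$, so only continuity of $\check L$ on the unit sphere bundle is needed; you instead pass through the equivalence proposition of Section \ref{sez generalities} (condition (iii) there is exactly \eqref{cali11} with $a=c(H)$; the restriction to piecewise $C^1$ curves is immaterial since the implication towards the a.e.\ subsolution property only tests short smooth segments) to view $u_\lambda^+$ as an a.e.\ subsolution of \eqref{bb}, and then invoke superlinearity (H2) to trap $d_xu_\lambda^+$ in a fixed sublevel set $\{H\le c(H)+C_1\}$. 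Both routes are standard and correct and both require the uniform sup-bound first; yours is the generic coercivity argument (and makes explicit use of (H2)), while the paper's is slightly more self-contained in that it never leaves the weak KAM domination inequality.
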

\begin{proof}
First, we prove the equi-bounded character of  $\{u^+_\lambda\}_{\lambda\in (0,1]}$.
By Theorem \ref{teo forward sol}, $u^+_\lambda\leq u_\lambda$ on $M$ for each $\lambda\in [0,1]$.
By Theorem \ref{darre}, $u_\lambda $ converges uniformly to a function $u_0\in \Dr C(M,\R)$, in particular there exists $C>0$ such that $\|u_\lambda \|_\infty\leq C$ for each $\lambda\in [0,1]$. It follows that the  $\{u^+_\lambda\}_{\lambda\in (0,1]}$ are equi-bounded from above. Let us prove they are equi-bounded from below.
By Proposition \ref{wwyrr}, we have $u^+_\lambda=\check T_{t,\lambda}^+u^+_\lambda$ for each $t\geq 0$.
Take $x_0\in \mathcal{I}_{u^+_\lambda}=\{x\in M\ |\ u_{\lambda}(x)=u^+_{\lambda} (x)\}$.
Let $\alpha:[0,1]\to M$ be a geodesic connecting $x$ to $x_0$,  parameterized by constant speed $|\dot{\alpha}(s)|:=d(x,x_0)\leq \text{diam}(M)$ for each $s\in [0,1]$. Let
\[
C':=\max_{x\in M, |\dot{x}|\leq \text{diam}(M)} \big( \check{L}(x,\dot{x}) + c(H) \big).
\]
By (\ref{2-3}),
\begin{align*}
u^+_\lambda(x)=\check T_{1,\lambda}^+u^+_\lambda(x)\geq e^{\lambda}u_\lambda (x_0)-\int_0^1e^{\lambda s}\big( \check{L}(\alpha(s),\dot{\alpha}(s)) +c(H)\big)\,ds
\geq
e^\lambda \left( u_\lambda (x_0)-C'\right),
\end{align*}
which implies $u^+_\lambda$ is bounded from below for each $\lambda\in (0,1]$. Thus, there exists $K>0$ such that
$\|u^+_\lambda\|_\infty\leq K$ for all $\lambda\in (0,1]$.

Next, we prove the equi-Lipschitz continuity. For each $x$, $y\in M$, let $\beta:[0,d(x,y)]\to M$ be a geodesic of length $d(x,y)$, parameterized by arclength and connecting $x$ to $y$. Let
	{\[
	C'':=\sup\{\check{L}(x,\dot{x})+c(H)\ |\ x\in M,\ \|\dot{x}\|_x=1\}.
	\]
In view of \eqref{cali11}, we have
\[e^{\lambda d(x,y)}v(x)-e^{\lambda d(x,y)}v(y)\leq v(y)(1-e^{\lambda d(x,y)})+\frac{C''}{\lambda}(e^{\lambda d(x,y)}-1),\]
which yields from $0<\lambda\leq 1$ and $1-e^{-h}\leq h$ for all $h\in \R$,
\begin{align*}
v(x)-v(y)&\leq v(y)(e^{-\lambda d(x,y)}-1)+\frac{C''}{\lambda}(1-e^{-\lambda d(x,y)})\\
&\leq (C''+K)\frac{1-e^{-\lambda d(x,y)}}{\lambda}\\
&\leq (C''+K)d(x,y).
\end{align*}}
	We complete the proof  by exchanging the roles of $x$ and $y$.
\end{proof}

Next, we show the following result.

\begin{proposition}\label{prop Aubry in Sigma}
Let us assume that $H(x,0)\leq  c(H)$ for every $x\in M$. Then projected Aubry set $\A$ associated with \eqref{eq HJ check} is contained in ${\mathcal{A}}^\lambda$ for every $\lambda>0$.
\end{proposition}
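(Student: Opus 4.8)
The plan is to argue through the dynamical description of the discounted Aubry set. By \eqref{def sigma} we have $\mathcal A^\lambda=\pi\big(\tilde{\mathcal A}^\lambda\big)$ with $\tilde{\mathcal A}^\lambda=\bigcap_{t\ge 0}\check\Phi_\lambda^{-t}(G_{u_\lambda})$, and by Corollary \ref{cor Aubry}(ii) we have $d_yu_\lambda=0$ for every $y\in\A$. Hence it suffices to show that, for each $y\in\A$, the point $(y,0)$ belongs to $\tilde{\mathcal A}^\lambda$; equivalently, that the orbit of the discounted flow $\check\Phi_\lambda^t$ issuing from $(y,0)$ remains inside $G_{u_\lambda}$ for every $t\ge 0$ (it will in fact stay there for all $t\in\R$).

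First I would fix $y\in\A$ and take the Aubry curve $\gamma:\R\to\A$ through $y$ provided by Proposition \ref{prop Aubry}(ii). Applied to the subsolution $v=u_\lambda$ of \eqref{eq HJ check} --- which is a subsolution by Corollary \ref{cor Aubry} --- that proposition also tells us that $\gamma$ is calibrated for $u_\lambda$, i.e. it realizes the equality in \eqref{eq dominating curve} with null discount. Since $\gamma(\R)\subseteq\A$ and $u_\lambda$ is differentiable at every point of $\A$, the remark following \eqref{eq dominating curve} in Section \ref{sez generalities}, used with null discount, identifies $\gamma$ with the base projection of an orbit of the undiscounted Hamiltonian flow of $\check H$, along which the momentum equals $d_{\gamma(t)}u_\lambda$, hence vanishes for every $t$ by Corollary \ref{cor Aubry}(ii). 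In particular $\dot\gamma(t)=\frac{\partial\check H}{\partial p}(\gamma(t),0)$ for all $t\in\R$.

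Next I would check that $t\mapsto(\gamma(t),0)$ actually solves the discounted system \eqref{cott} for the given $\lambda$. The equation for $\dot x$ is the identity just obtained, and since the momentum is identically $0$ along the curve the drift term $-\lambda p$ of \eqref{cott} vanishes, so only $\frac{\partial\check H}{\partial x}(\gamma(t),0)=0$ remains to be verified. This is exactly where the standing hypothesis comes in: because $\gamma(t)\in\A$, Proposition \ref{prop Aubry}(i) applied to the constant subsolution $0$ of \eqref{eq HJ check} --- a subsolution precisely by virtue of $H(\cdot,0)\le c(H)$ --- gives $H(\gamma(t),0)=c(H)$; thus $\gamma(t)$ is a global maximum point of $x\mapsto H(x,0)$ on the closed manifold $M$, whence $\frac{\partial H}{\partial x}(\gamma(t),0)=0$, that is $\frac{\partial\check H}{\partial x}(\gamma(t),0)=0$. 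By uniqueness for \eqref{cott} together with $(\gamma(0),0)=(y,0)$, this gives $\check\Phi_\lambda^t(y,0)=(\gamma(t),0)=(\gamma(t),d_{\gamma(t)}u_\lambda)$ for all $t\in\R$; since $u_\lambda$ is differentiable at $\gamma(t)\in\A$, the right-hand side lies in $G_{u_\lambda}$. Therefore $(y,0)\in\check\Phi_\lambda^{-t}(G_{u_\lambda})$ for all $t\ge 0$, so $(y,0)\in\tilde{\mathcal A}^\lambda$ and $y\in\mathcal A^\lambda$; as $y$ was an arbitrary point of $\A$, we conclude $\A\subseteq\mathcal A^\lambda$.

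The crux of the argument --- the only step that is not routine bookkeeping --- is the observation in the previous paragraph that $H(\cdot,0)\le c(H)$ forces the Aubry curve to lie in the critical set of $x\mapsto H(x,0)$; combined with $du_\lambda\equiv 0$ on $\A$, this is precisely what neutralizes the discount drift along this particular orbit and lets the undiscounted Aubry dynamics embed into the discounted flow $\check\Phi_\lambda^t$. The one point requiring care is that differentiability of $u_\lambda$ must be used at \emph{every} point of $\gamma$, not merely at the base point $y$ --- both to keep the carried momentum identically $0$ and to place the whole orbit in $G_{u_\lambda}$ --- which is legitimate because $\gamma(\R)\subseteq\A$ and $u_\lambda$ is differentiable throughout $\A$ by Corollary \ref{cor Aubry}.
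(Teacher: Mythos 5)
Your argument is correct, but it runs along a different track than the paper's. The paper stays at the level of the action: it takes the Aubry curve $\gamma$ calibrated for $u_\lambda$ in the undiscounted sense (Proposition \ref{prop Aubry}(ii)), multiplies the calibration identity by $e^{\lambda s}$ and integrates by parts, and observes that the extra term $\int_a^b \lambda e^{\lambda s}u_\lambda(\gamma(s))\,ds$ vanishes because $u_\lambda\equiv 0$ on $\A$ (Corollary \ref{cor Aubry}(i)); this upgrades $\gamma$ to a curve calibrated for the \emph{discounted} equation, and the remark after \eqref{eq dominating curve} then identifies it with a projected orbit of $\check\Phi^t_\lambda$ through $(x,d_xu_\lambda)$. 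You instead work at the level of the ODE \eqref{cott}: you lift $\gamma$ to the zero-momentum orbit of the undiscounted flow (using $d u_\lambda\equiv 0$ on $\A$, Corollary \ref{cor Aubry}(ii)) and check directly that this lift also solves the discounted system, because the drift $-\lambda p$ dies on the zero section and $\partial_x\check H(\gamma(t),0)=0$ since, by Proposition \ref{prop Aubry}(i) applied to the constant subsolution, points of $\A$ are global maximizers of $H(\cdot,0)$. So the standing hypothesis enters through different channels: the paper via the \emph{value} information $u_\lambda=0$ on $\A$ (which rests on Theorem \ref{darre}), you via the \emph{derivative} information $du_\lambda=0$ on $\A$ together with $\A\subseteq\arg\max H(\cdot,0)$. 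Your route makes the dynamical mechanism transparent (the discount is invisible on the zero section over $\A$) and delivers Remark \ref{oss Aubry in Sigma} verbatim, at the cost of invoking smoothness/uniqueness for \eqref{cott} and the stationarity of $H(\cdot,0)$ on $\A$; the paper's route is a two-line integration by parts that needs no differentiation of $H$ in $x$ and would survive in settings where one only controls the value of $u_\lambda$ on $\A$. One point you rightly flag, and which is equally terse in the paper: identifying the full cotangent lift $(\gamma(t),d_{\gamma(t)}u_\lambda)$, not merely its projection, with the flow orbit requires applying the calibration remark at every time $t$, which is legitimate precisely because $u_\lambda$ is differentiable along all of $\A$ with vanishing differential.
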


\begin{remark}\label{oss Aubry in Sigma}
From the previous proposition and the fact that $d_x u_\lambda\equiv 0$ on $\A$ in view of Corollary \ref{cor Aubry}, we get in particular that
{\[
\{(y,0)\in T^*M\,\mid\,y\in\A\}\subseteq \tilde{\mathcal{A}}^\lambda.
\]}
\end{remark}

\begin{proof}
Let us fix $\lambda>0$ and pick $x\in\A$. According to Corollary \ref{cor Aubry}, the function $u_\lambda$ is differentiable at $x$, so
{$(x, d_x u_\lambda)\in G_{u_\lambda}$}, and by Proposition \ref{prop Aubry} there exists a unique curve
$\gamma:\R\to \A\subseteq M$ such that $\gamma(0)=x$ and
\[
u_\lambda(\gamma(b))-u_\lambda(\gamma(a))
=
\int_a^b  \big(\check L(\gamma(s),\dot\gamma(s))+c(H)\big)\,ds\qquad\hbox{for all $a<b$.}
\]
In particular,
\[
\check L(\gamma(s),\dot\gamma(s))+c(H)
=
\langle d_{\gamma(s)} u_\lambda,\dot\gamma(s)\rangle
=
\frac{d}{ds} u_\lambda(\gamma(s))\qquad\hbox{for all $s\in\R$.}
\]
By multiplying the above equality by $e^{\lambda s}$ and by integrating (by parts) in $(a,b)$ we get
\[
\int_a^b e^{\lambda s}  \big(\check L(\gamma(s),\dot\gamma(s))+c(H)\big)\,ds
=
e^{\lambda b} u_\lambda(\gamma(b)) - e^{\lambda a} u_\lambda(\gamma(a)) - \int_a^b \big(e^{\lambda s}\big)' u_\lambda(\gamma(s))\, ds,
\]
hence, by taking into account that $\gamma(\R)\subseteq \A$ and $u_\lambda\equiv 0$ on $\A$, we get
\[
e^{\lambda b} u_\lambda(\gamma(b)) - e^{\lambda a} u_\lambda(\gamma(a))
=
\int_a^b e^{\lambda s}  \big(\check L(\gamma(s),\dot\gamma(s))+c(H)\big)\,ds.
\]
This shows that $\gamma$ is the projection on $M$ of an integral curve of the {flow $\check\Phi^t_\lambda$ generated by (\ref{cott}), i.e.  $\gamma(t)=\pi\big(\check\Phi^t_\lambda(x,d_xu_\lambda)\big)$ } for all $t\in\R$.
In particular, we get that
$x\in \pi\big(\check\Phi_\lambda^{-t} (G_{u_\lambda})\big)$ for all $t\geq  0$, i.e. $x\in{\mathcal{A}}^\lambda$, as it was asserted.
\end{proof}

\begin{proof}[Proof of Theorem \ref{222}]
In view of Lemma \ref{uuee} and of {Ascoli-Arzel\'a's theorem}, it is enough to show that, if $u_{\lambda_n}^+$ converges to $u_*$ uniformly on $M$ as
$\lambda_n\ri 0^+$, then $u_*=u_0^+$ on $M$.  In view of the correspondence between forward and backward, or viscosity, solutions, see
\cite[Proposition 2.8]{WWY2}, we know that  $-u_{\lambda_n}^+$ is a viscosity solution of \eqref{aa}.
By the stability of the notion of viscosity solution, $-u_*$ is a viscosity solution of \eqref{ss}, which means that
$u_*$ is a forward weak KAM solution of (\ref{eq HJ check}). Furthermore,
by Corollary \ref{cor Aubry}, Theorem \ref{teo forward sol} and Proposition \ref{prop Aubry in Sigma}, we know that
$u^+_{\lambda_n}\equiv 0$ on $\A$, hence $u_*\equiv 0$ on $\A$.
Hence, $-u_*$ and $-u_0^+$ are both viscosity solutions of \eqref{ss} with  $-u_*\equiv -u_0^+$ on $\A$.
We conclude that $u_* \equiv u_0^+$ on $M$ by Proposition \ref{prop Aubry}.
\end{proof}

\section{On the example (\ref{ex-unique88}) }\label{e888}

By the recalled equivalence between viscosity, or backward weak KAM, solutions of \eqref{aa} and forward solutions of \eqref{bb}, see  \cite[Proposition 2.8]{WWY2},
it suffices to show the uniqueness of the forward weak KAM solution of
\begin{equation}\label{ex-unique}\tag{E$_I$}
\lambda u+\frac{1}{2}|d_x u|^2+U(x)=c,\quad x\in \T^1:=\mathbb{R}/\mathbb{Z},\quad \lambda>0,
\end{equation}
where $\mathbb{T}^1$ is a flat circle with the standard metric. For any two points $x,y\in M$, we use $|x-y|$ to denote the distance induced by the flat metric on $\mathbb{T}^1$.
We recall that $U:\T^1\to\R$ is of class $C^3$ and has a unique maximum point $x_0$ with $U(x_0)=c$, which is furthermore assumed to be non-degenerate,
i.e. $U''(x_0)<0$. When $U(x)=\cos(2\pi x)$,
(\ref{ex-unique}) corresponds to the dissipative pendulum.

To obtain this uniqueness result, we need some preliminary material that we will develop in the next section.

\subsection{Some preliminary facts}

We start by the following known fact about reversible Hamiltonians.

\begin{proposition}\label{prop reversible}
Let us assume that $H$ is reversible, i.e. $H(x,p)=H(x,-p)$ for all $(x,p)\in T^*M$.
Then
\begin{itemize}
\item[(i)] $H(x,p)>H(x,0)$ for every $x\in M$ and $|p|\not=0$, moreover $c(H)=\max_{x\in M} H(x,0)$, in particular, \ $H(x,0)\leq  c(H)$\  for every $x\in M$;
\item[(ii)] the projected Aubry set $\A$ associated with \eqref{ss} is given by
$$
\A=\{y\in M\,\mid\, H(y,0)=c(H)\};
$$
\end{itemize}
\end{proposition}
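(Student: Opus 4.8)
The plan is to exploit the strict convexity of $H$ in $p$ together with the reversibility symmetry. For item (i), the key observation is that if $H(x,p)=H(x,-p)$ and $H(x,\cdot)$ is strictly convex, then $H(x,\cdot)$ attains its minimum exactly at $p=0$: indeed, by convexity $H(x,0)=H\big(x,\tfrac12 p+\tfrac12(-p)\big)\le \tfrac12 H(x,p)+\tfrac12 H(x,-p)=H(x,p)$ for every $p$, and strict convexity turns this into a strict inequality whenever $p\neq 0$. This immediately gives the first claim $H(x,p)>H(x,0)$ for $|p|\neq 0$. To identify $c(H)$, I would use the characterization $c(H)=\min\{a\in\R\mid \exists\, v\in\Lip(M,\R),\ H(x,d_xv)\le a\ \text{a.e.}\}$ recalled after \eqref{def critical constant}. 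Taking $v\equiv \text{const}$ shows $\max_{x}H(x,0)$ is an admissible value of $a$, hence $c(H)\le \max_x H(x,0)$; conversely, any Lipschitz a.e.\ subsolution $v$ satisfies $H(x,d_xv)\le a$, and at a point $x_1$ where $v$ is differentiable (a.e.\ point) we have $H(x_1,d_{x_1}v)\ge H(x_1,0)$ by the minimum property just proved, so $a\ge H(x_1,0)$; choosing $x_1$ near a maximizer of $x\mapsto H(x,0)$ and using continuity gives $a\ge \max_x H(x,0)$, whence $c(H)\ge\max_x H(x,0)$. Combining yields $c(H)=\max_x H(x,0)$, and in particular $H(x,0)\le c(H)$ everywhere.

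For item (ii), I would show the two inclusions separately. For the inclusion $\A\subseteq\{y\mid H(y,0)=c(H)\}$: since constant functions are critical subsolutions by item (i), and since by \eqref{def Aubry}/Proposition \ref{prop Aubry}(i) every $y\in\A$ is a point of differentiability of \emph{every} critical subsolution with a common differential, applying this to the constant subsolution forces $d_y v=0$ for the gradient shared on $\A$, and then $H(y,0)=H(y,-d_yv)=c(H)$ by Proposition \ref{prop Aubry}(i) (recall $\check\A=\A$ and $H(y,-p)=H(y,p)$ here). For the reverse inclusion $\{y\mid H(y,0)=c(H)\}\subseteq\A$: if $H(y_0,0)=c(H)=\max_x H(x,0)$, then $y_0$ is a maximizer of $x\mapsto H(x,0)$, so the constant curve $\gamma\equiv y_0$ is a static curve — one checks that $\check L(y_0,0)+c(H)=0$, using that $\check L(y_0,0)=L(y_0,0)=-H(y_0,0)=-c(H)$ via the Fenchel transform (the minimum of $H(y_0,\cdot)$ at $0$ corresponds to $L(y_0,0)=-H(y_0,0)$). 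Thus the constant curve at $y_0$ calibrates every critical subsolution, giving a closed (hence recurrent, in fact fixed) orbit of the Aubry flow projecting to $y_0$, which places $y_0$ in the Aubry set by the standard characterization (e.g.\ $y_0$ is the projection of a point of the Aubry set because the constant curve is contained in every subsolution's differentiability structure, or directly because static curves project into $\A$).

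The main obstacle I expect is not any single hard estimate but rather assembling the correct chain of standard equivalences — in particular, justifying cleanly that $H(y,0)=c(H)$ implies the constant curve at $y$ is calibrated/static (which requires the Fenchel-transform identity $\check L(y,0)=-H(y,0)$ together with the minimum-at-$0$ property), and conversely that a point of $\A$ must have vanishing gradient of every subsolution there. Both rely only on results quoted in the excerpt (Proposition \ref{prop Aubry}, the characterization \eqref{def critical constant}, and the definition of $\A$), so the proof stays short; the only genuine input is the convexity-plus-symmetry argument in item (i), which is elementary.
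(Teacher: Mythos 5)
Your item (i) and your inclusion $\A\subseteq\{y\mid H(y,0)=c(H)\}$ are correct and essentially identical to the paper's argument (minimum of $H(x,\cdot)$ at $p=0$ from symmetry plus strict convexity, constant functions as subsolutions together with \eqref{def critical constant} to identify $c(H)$, and Proposition \ref{prop Aubry}(i) applied to a constant subsolution for the forward inclusion); you even treat the a.e.\ point slightly more carefully than the paper does. The problem is the reverse inclusion $\{y\mid H(y,0)=c(H)\}\subseteq\A$, which is the only nontrivial part of (ii), and there your argument has a genuine gap. The observation that $\check L(y_0,0)+c(H)=0$ makes the constant curve at $y_0$ "calibrate every subsolution" is vacuous as stated: for a constant curve the calibration identity reads $v(y_0)-v(y_0)=0=\int_a^b\big(\check L(y_0,0)+c(H)\big)\,ds$, which holds for \emph{any} function $v$ and therefore carries no information about $v$ near $y_0$. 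To convert $\min_p H(y_0,p)=c(H)$ into $y_0\in\A$ you must invoke either the Peierls-barrier/Ma\~n\'e characterization of the Aubry set (zero-action loops of time length bounded below, so that the constant loop forces $y_0\in\A$), or a differentiability-along-calibrated-curves lemma; neither is among the facts quoted in the paper, and your stated justification ("the constant curve is contained in every subsolution's differentiability structure, or directly because static curves project into $\A$") is not a proof. Note also that Proposition \ref{prop Aubry} only gives \emph{necessary} conditions for membership in $\A$, and the paper's characterization \eqref{def Aubry} requires you to show that \emph{every} subsolution is differentiable at $y_0$ — which your calibration remark does not do. So your claim that the proof "relies only on results quoted in the excerpt" is not accurate for this step.

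The paper closes exactly this step with a short nonsmooth-analysis argument that you could adopt: by item (i), at a point $y$ with $H(y,0)=c(H)$ the sublevel set $\{p\in T^*_yM\mid H(y,p)\leq c(H)\}$ reduces to the singleton $\{0\}$; since the Clarke generalized gradient $\partial^c v(y)$ of any Lipschitz critical subsolution $v$ is contained in this sublevel set, it is a singleton, hence $v$ is (strictly) differentiable at $y$, and \eqref{def Aubry} then yields $y\in\A$. Alternatively, your static-curve route can be made rigorous, but only by importing the Peierls-barrier characterization of $\A$ (with a citation to Fathi or Fathi--Siconolfi) rather than by appealing to the definitions recalled in the paper.
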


\begin{proof}
Let us prove (i). Since $H$ is reversible, we have
$\frac{\partial H}{\partial p}(x,p)=-\frac{\partial H}{\partial p}(x,-p)$\ \ for each $(x,p)\in T^* M$.
In particular, $\frac{\partial H}{\partial p}(x,0)=0$. Combining with $\frac{\partial^2 H}{\partial p^2}(x,p)>0$, we have
for each $x\in M$,
\begin{equation}\label{eq reversible H}
H(x,p)>H(x,0)=\min_{p\in T_x^*M}H(x,p)\qquad\hbox{for all $x\in M$ and $p\in T_x^*M\setminus\{0\}$}.
\end{equation}
Let us set $c:=\max_{x\in M} H(x,0)$. Since any constant function $v$ on $M$ is a subsolution of $H(x,d_x v)=c$ in $M$, we have $c(H)\leq c$ in view of
\eqref{def critical constant}. On the other hand, if $v$ is subsolution of $H(x,d_x v)=c(H)$ in $M$, we have in particular
\[
H(x,0)=\min_{p\in T_x^* M} H(x,p)\leq H(x,d_x v)\leq c(H)\qquad\hbox{for a.e. $x\in M$,}
\]
yielding $c=\max_{x\in M} H(x,0)\leq c(H)$. This shows that $\max_{x\in M} H(x,0)=c(H)$.

(ii) Let us denote by $\mathcal E$ the set appearing at the right-hand side of the equality in (ii).
The function $v_0\equiv 0$ satisfies $H(x,d_x v_0)\leq c(H)$ for every $x\in M$, with a strict inequality holding when $x\not\in\mathcal E$.
This shows that $\A\subseteq \mathcal E$ in view of Proposition \ref{prop Aubry}. Let now $v$ be a subsolution of $H(x,d_x v)=c(H)$ in $M$. Then
the Clarke generalized gradient $\partial^c v(x)$ of $v$ at $x$ satisfies \ \ $\partial^c v(x)\subseteq \{p\in T_x^*M\,\mid\, H(x,p)\leq c(H)\}$
for every $x\in M$, see for instance \cite{Sic_chapter}. In particular, it is a singleton whenever $x\in\mathcal E$. This implies that $v$ is (strictly)
differentiable at any $x\in\mathcal E$, see \cite[Proposition 2.2.4]{Cl}. This show that $\mathcal E\subseteq \A$ by \eqref{def Aubry}.
\end{proof}

We focus now on the properties enjoyed by the Mather set $\tilde \M^\lambda$ associated with equation \eqref{ex-unique}. We start with a general fact for reversible Hamiltonians.

\begin{proposition}\label{prop invariant measure}
Let us assume that $H$ is reversible and let $\mu$ be a $\check\Phi_\lambda^t$-invariant probability measure on $T^*M$. Then
\[
 \text{supp}(\mu)\subseteq\{(x,0)\in T^*M\,\mid\,x\in M\}.
\]
\end{proposition}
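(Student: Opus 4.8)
The plan is to use the structure of the discounted flow \eqref{cott} associated with a reversible Hamiltonian, exploiting the reversibility to show that any invariant measure must concentrate on the set where the ``velocity'' vanishes. First I would observe that, since $H$ is reversible, $\check H = H$, so the flow in question is the flow of \eqref{cott} with $\check H$ replaced by $H$. The key observation is that the function $E(x,p):=H(x,p)$ is a natural Lyapunov-type quantity for the discounted flow: along an orbit $(x(t),p(t))=\check\Phi_\lambda^t(x_0,p_0)$ one computes
\[
\frac{d}{dt}H(x(t),p(t))
=
\frac{\partial H}{\partial x}\cdot\dot x+\frac{\partial H}{\partial p}\cdot\dot p
=
\frac{\partial H}{\partial x}\cdot\frac{\partial H}{\partial p}+\frac{\partial H}{\partial p}\cdot\Big(-\frac{\partial H}{\partial x}-\lambda p\Big)
=
-\lambda\,\frac{\partial H}{\partial p}(x,p)\cdot p.
\]
By strict convexity of $H$ in $p$ together with $\frac{\partial H}{\partial p}(x,0)=0$ (which holds by reversibility, as shown in the proof of Proposition \ref{prop reversible}), the quantity $\frac{\partial H}{\partial p}(x,p)\cdot p$ is strictly positive for $p\neq 0$ and zero for $p=0$; indeed $\frac{\partial H}{\partial p}(x,p)\cdot p=\int_0^1\frac{\partial^2 H}{\partial p^2}(x,sp)[p,p]\,ds\geq 0$ with equality iff $p=0$. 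Hence $H$ is non-increasing along orbits, and strictly decreasing at any point with $p\neq 0$.

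Next I would feed this into the invariance of $\mu$. Let $g(x,p):=\lambda\,\frac{\partial H}{\partial p}(x,p)\cdot p\geq 0$, so that $\frac{d}{dt}H\circ\check\Phi_\lambda^t=-g\circ\check\Phi_\lambda^t$. For any $t>0$, integrating gives
\[
H(\check\Phi_\lambda^t(x,p))-H(x,p)=-\int_0^t g(\check\Phi_\lambda^s(x,p))\,ds.
\]
Integrate this identity over $T^*M$ with respect to $\mu$. Since $\mu$ is $\check\Phi_\lambda^t$-invariant and $H$ is bounded on the (compact) support of $\mu$, the left-hand side integrates to $0$; by Fubini and invariance again, the right-hand side equals $-t\int g\,d\mu$. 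Therefore $\int g\,d\mu=0$, and since $g\geq 0$ this forces $g=0$ $\mu$-a.e., i.e. $\frac{\partial H}{\partial p}(x,p)\cdot p=0$ on $\operatorname{supp}(\mu)$ (after using continuity of $g$ and closedness of its zero set). As noted above, this zero set is exactly $\{(x,p)\mid p=0\}$, so $\operatorname{supp}(\mu)\subseteq\{(x,0)\mid x\in M\}$, which is the claim.

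A couple of technical points should be handled carefully. One must check that $\operatorname{supp}(\mu)$ is compact so that $H$ (and hence its integral) is finite and the manipulations above are legitimate — this follows because $\mu$ is a probability measure invariant under a flow, and one can argue that its support lies in a sublevel set $\{H\leq C\}$: indeed, invariance plus the monotonicity of $H$ along orbits shows that $\operatorname{ess\,sup}_{\mu}H$ is attained on an invariant set, and on an invariant set $H$ is constant by the identity above, so $g=0$ there. (Alternatively, one can simply invoke that a Borel invariant probability measure must be supported where the flow is recurrent, but the energy argument is cleaner.) The main obstacle, and the place to be careful, is precisely this interchange of integration and the flow, together with ensuring $H$ is $\mu$-integrable; once compactness of the support is secured, the rest is the elementary convexity computation above. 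I would therefore begin the proof by establishing compactness of $\operatorname{supp}(\mu)$ — or, more simply, by noting that $t\mapsto\int H\circ\check\Phi_\lambda^t\,d\mu$ is constant by invariance and differentiating it to get $\int g\,d\mu=0$ directly, which sidesteps boundedness issues as long as one first restricts attention to the (automatically compact, being the support of an invariant probability measure on the flow restricted to a sublevel set) support — I would state this cleanly and then run the convexity argument to conclude.
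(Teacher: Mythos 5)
Your proof is correct and follows essentially the same route as the paper's: the Lyapunov computation $\frac{d}{dt}H\bigl(\check\Phi_\lambda^t(x,p)\bigr)=-\lambda\,\langle p,\frac{\partial\check H}{\partial p}(x,p)\rangle\le 0$, with equality if and only if $p=0$ by convexity and reversibility, followed by integrating against $\mu$ and using invariance (Fubini) to conclude $\langle p,\frac{\partial\check H}{\partial p}\rangle=0$ $\mu$-a.e.\ and hence on $\operatorname{supp}(\mu)$. Your additional discussion of integrability and compactness of $\operatorname{supp}(\mu)$ is the only departure (the paper does not address it, and your sketch of why the support is compact is the shakiest part), but it does not affect the substance of the argument.
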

\begin{proof}
Fix $(x,p)\in T^*M$. For every $t\in\R$ we have
\[
 \frac{d}{dt} H\left(\check\Phi^t_\lambda(x,p)\right)
 =
 -\lambda \langle p(t), \frac{\partial \check H}{\partial p}\left(\check\Phi^t_\lambda(x,p)\right)\rangle \leq 0
\]
by convexity, with equality holding if and only if $p(t)=0$ in view of \eqref{eq reversible H}. If $\mu$ is $\check\Phi_\lambda^t$-invariant,
we infer
\begin{eqnarray*}
0
=
\int_{T^*M} H\left(\check\Phi^1_\lambda(x,p)\right)\, d\mu(x,p)
-
\int_{T^*M} H\left(\check\Phi^t_\lambda(x,p)\right)\, d\mu(x,p)\\
= \int_0^1 \left( \int_{T^*M} \frac{d}{ds} H\left(\check\Phi^s_\lambda(x,p)\right)\, d\mu(x,p)\right)\, ds
=
-\lambda  \int_{T^*M} \langle p, \frac{\partial \check H}{\partial p}\left(x,p\right) \, d\mu(x,p),
\end{eqnarray*}
yielding
\[
 \langle p, \frac{\partial \check H}{\partial p}\left(x,p\right)\rangle=0\qquad\hbox{for $\mu$--a.e. $(x,p)\in T^*M$.}
\]
The assertion follows in view of \eqref{eq reversible H}.
\end{proof}

We exploit the previous result to derive the following information.

\begin{proposition}\label{prop disreversible}
Let $\tilde\M^\lambda$ be the Mather set associated with equation \eqref{ex-unique}, where
$U:\T^1\to\R$ is of class $C^3$ and has a unique maximum point $x_0$ with $U(x_0)=c$, which is furthermore assumed to be non-degenerate,
i.e. $U''(x_0)<0$. Then $(x_0,0)$ is a hyperbolic fixed point for the discounted flow \eqref{cott} and, for $\lambda>0$ small enough,
$\tilde\M^\lambda=\{(x_0,0)\}$.
\end{proposition}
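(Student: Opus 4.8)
The plan is to split the statement into two parts: the hyperbolicity of $(x_0,0)$ and the identification $\tilde\M^\lambda=\{(x_0,0)\}$ for small $\lambda>0$.

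\textbf{Hyperbolicity of $(x_0,0)$.} For the Hamiltonian $H(x,p)=\tfrac12|p|^2+U(x)$ we have $\check H(x,p)=H(x,-p)=\tfrac12|p|^2+U(x)=H(x,p)$, so the discounted flow \eqref{cott} reads $\dot x=p$, $\dot p=-U'(x)-\lambda p$. Since $U'(x_0)=0$, the point $(x_0,0)$ is a fixed point. Linearizing, the Jacobian at $(x_0,0)$ is $\begin{pmatrix} 0 & 1\\ -U''(x_0) & -\lambda\end{pmatrix}$, whose eigenvalues solve $\mu^2+\lambda\mu+U''(x_0)=0$. Because $U''(x_0)<0$, the product of the eigenvalues is $U''(x_0)<0$, so one eigenvalue is strictly positive and the other strictly negative for every $\lambda$; hence $(x_0,0)$ is a hyperbolic saddle. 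This step is routine.

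\textbf{Identification of the Mather set.} Here I would use Proposition~\ref{prop invariant measure}. Since $H$ is reversible, any $\check\Phi_\lambda^t$-invariant probability measure $\mu$ is supported in the zero section $\{(x,0)\mid x\in M\}$. On the zero section, $\dot x=p=0$ and $\dot p=-U'(x)$, so invariance of $\text{supp}(\mu)$ under the flow forces $U'(x)=0$ on $\text{supp}(\mu)$; i.e. $\text{supp}(\mu)$ is contained in the (finite) set of critical points of $U$, each of which is a fixed point $(x_c,0)$ of the flow. Thus every Mather measure is a convex combination of Dirac masses at critical points $(x_c,0)$, and in particular $\tilde\M^\lambda\subseteq\{(x_c,0)\mid U'(x_c)=0\}$ for \emph{all} $\lambda>0$ — no smallness needed yet. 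By Remark~\ref{oss Aubry in Sigma} together with Proposition~\ref{prop reversible}(ii), $(x_0,0)\in\tilde{\mathcal A}^\lambda=\tilde{\mathcal I}_{u_\lambda^+}^\lambda$, and since $U$ has a unique maximum, $\A=\{x_0\}$, so $\A$ contributes exactly the fixed point $(x_0,0)$; one checks the Dirac mass $\delta_{(x_0,0)}$ is a $\check\Phi_\lambda^t$-invariant measure supported in $\tilde{\mathcal I}_{u_\lambda^+}^\lambda$, hence $(x_0,0)\in\tilde\M^\lambda$. It remains to exclude the other critical points of $U$ from $\tilde\M^\lambda$ when $\lambda$ is small.

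\textbf{Where the smallness enters — the main obstacle.} The delicate point is ruling out the non-maximal critical points $(x_c,0)$, $c\neq 0$, from $\tilde\M^\lambda=\tilde\M_{u_\lambda^+}^\lambda$ for small $\lambda$. By definition a fixed point $(x_c,0)$ lies in $\tilde\M^\lambda$ only if $\delta_{(x_c,0)}$ is a Mather measure, i.e. only if $(x_c,0)\in\tilde{\mathcal I}_{u_\lambda^+}^\lambda$, which by Proposition~\ref{wwyrr} and Theorem~\ref{teo forward sol}(iii) means $u_\lambda(x_c)=u_\lambda^+(x_c)$ with $d_{x_c}u_\lambda=d_{x_c}u_\lambda^+=0$. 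The strategy is: since $u_\lambda\nearrow u_0$ uniformly (Theorem~\ref{darre}) and $u_0\equiv 0$ on $\A=\{x_0\}$ with $u_0>0$ off $\A$ (by strict positivity away from the unique maximum, arguing as in the reversible/mechanical case), one expects $u_\lambda$ to stay uniformly bounded below by a positive constant on a fixed neighborhood of each $x_c$ with $c\neq 0$ for $\lambda$ small; combined with Corollary~\ref{cor Aubry}-type reasoning and the hyperbolic/unstable structure at $(x_c,0)$ established above, a calibrated curve through such a point would have to descend the potential, contradicting $u_\lambda(x_c)=u_\lambda^+(x_c)$ and $d_{x_c}u_\lambda^+=0$. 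Concretely, I would show that for small $\lambda$ the stable manifold of $(x_c,0)$ cannot meet $G_{u_\lambda^+}$ at $(x_c,0)$ with the correct derivative, or equivalently that the forward-calibrated curve emanating from $x_c$ for $u_\lambda^+$ leaves every small neighborhood, so $\omega(x_c,0)\not\ni(x_c,0)$ contradicts Proposition~\ref{asypp} unless the mass escaped — forcing $(x_c,0)\notin\tilde\M^\lambda$. Making this quantitative and uniform in $\lambda\to 0^+$, using the non-degeneracy $U''(x_0)<0$ and the finiteness of $\{U'=0\}$, is the technical heart of the argument.
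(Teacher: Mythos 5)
Your first two steps are sound and coincide with the first half of the paper's proof: the linearization at $(x_0,0)$ giving a saddle, the use of Proposition \ref{prop invariant measure} plus invariance of supports to get $\tilde\M^\lambda\subseteq\{(x_c,0)\mid U'(x_c)=0\}$ for every $\lambda>0$, and the observation that $(x_0,0)\in\tilde\M^\lambda$ (the paper gets the last point more cheaply from nonemptiness of $\tilde\M^\lambda$, but your route via Remark \ref{oss Aubry in Sigma} is fine).

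The genuine gap is the step you yourself call the ``technical heart'': excluding the non-maximal critical points for $\lambda$ small. What you give there is a plan, not an argument, and the ingredients you name do not obviously close it: a uniform positive lower bound for $u_\lambda$ near a critical point $x_c$ contradicts nothing; and since $(x_c,0)$ is a fixed point, $\omega(x_c,0)=\{(x_c,0)\}$, so no contradiction with Proposition \ref{asypp} can arise that way. The missing idea --- and the way the paper makes the smallness of $\lambda$ quantitative --- is a value identity at such a point. If $(x_c,0)\in\tilde\M^\lambda\subseteq\tilde{\mathcal I}_{u_\lambda^+}^\lambda$, then by Proposition \ref{wwyrr} and Theorem \ref{teo forward sol} the constant curve $\gamma\equiv x_c$ is calibrated for $u_\lambda$, and letting the initial time tend to $-\infty$ in the discounted calibration identity yields
\[
u_\lambda(x_c)=\int_{-\infty}^{0}e^{\lambda s}\big(\check L(x_c,0)+c\big)\,ds=\frac{c-U(x_c)}{\lambda}.
\]
Theorem \ref{darre} gives $\|u_\lambda\|_\infty\leq C$ uniformly for $\lambda\in(0,1]$, so $c-U(x_\lambda)\leq C\lambda\to 0$ along any choice $(x_\lambda,0)\in\tilde\M^\lambda$; hence $x_\lambda\to x_0$ (the unique point where $U=c$), and since $x_0$ is an isolated critical point by $U''(x_0)<0$, necessarily $x_\lambda=x_0$ for all $\lambda$ small. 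Some quantitative mechanism of this kind is unavoidable: the Remark following the proposition exhibits a potential $U$ and a value $\lambda_0$ for which a second critical point does belong to $\tilde\M^{\lambda_0}$, so no soft argument ignoring the size of $\lambda$ (such as the stable-manifold obstruction you sketch) can succeed.
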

\begin{proof}
The fact that $(x_0,0)$ a hyperbolic fixed point for the discounted flow \eqref{cott} is easily checked. For every fixed $\lambda>0$, let us pick
$(x_\lambda,0)\in \tilde{\mathcal{M}}^\lambda$ and set $(x_\lambda(t),p_\lambda(t)):=\check\Phi^s_\lambda(x_\lambda,0)$.
By taking into account Proposition \ref{prop invariant measure} together with $\tilde{\mathcal{M}}^\lambda\subseteq \tilde{\mathcal{I}}_{u_\lambda}$ and
Proposition \ref{wwyrr}, we infer that
\[
0=p_\lambda(t)=d_{x_\lambda(t)}u_\lambda,
\quad \dot x_\lambda(t)=p_\lambda(t)=0,
\quad
0=\dot p_\lambda(t)=-\frac{\partial U}{\partial x}(x_\lambda(t),p_\lambda(t))
\]
for all $t\in\R$, namely $x_\lambda(t)=x_\lambda$ for all $t\in\R$ and $x_\lambda$ is a critical point for $U$. Furthermore,
the static curve $x_\lambda(t)=x_\lambda$ for all $t\in\R$
is optimal for $u_\lambda(x_\lambda)$, i.e. for any $a\leq b$,
 \[e^{\lambda b}u_\lambda(x_\lambda(b))-e^{\lambda a} u_\lambda(x_\lambda(a))
 = \int_a^b e^{\lambda s}\big( \check L(x_\lambda(s),\dot{x_\lambda}(s))+c \big) ds.
 \]
By taking $b=0$ and $a\ri-\infty$ we get
\[
u_{\lambda}(x_\lambda)=\int_{-\infty}^0 e^{\lambda s}\big(\check L(x_\lambda,0)+c\big)\,ds=\frac{c-U(x_\lambda)}{\lambda}\geq 0.
\]
Since $u_\lambda$ is converging to $u_0$ as $\lambda\to 0^+$ by Theorem \ref{darre},
we necessarily have that $x_\lambda \to x_0$ as $\lambda\to 0$. Since $x_0$ is an isolated critical point by the non-degeneracy condition, we infer that there exists $\lambda_0>0$
such that $x_\lambda=x_0$ for every $\lambda \leq \lambda_0$.
\end{proof}

\begin{remark}
In the dissipative pendulum case, i.e. when $U(x)=\cos(2\pi x)$, the statement of Proposition \ref{prop disreversible} holds for any $\lambda>0$. 
This follows as a consequence of  
\cite[Example 2]{MS} together with Theorem \ref{teo forward sol}-(iii). This is not true for a potential $U$ of more general form. In fact, let $w(x)$ be a smooth function on 
$\T^{1}$. Let $x_0$ be  its unique global minimum point with $w(x_0)=0$, $w''(x_0)>0$ and let  $x_1$ be another critical point such that $w'(x_1)=0$ and $w(x_1)>0$. Set
$U(x):=-w(x)-\frac{1}{2}|d_xw|^2$.
Then $x_0$ is the unique global maximum point of $U(x)$ with $U(x_0)=0$, $U''(x_0)<0$. For $\lambda_0=1$, $w$ is a smooth solution  
(hence both forward and backward weak KAM solution) of 
\[\lambda_0 u+\frac{1}{2}|d_x u|^2+U(x)=0.\] 
However,  one has $\{(x_0,0),(x_1,0)\}\subseteq\tilde{\mathcal{M}}^{\lambda_0}$.
\end{remark}


\subsection{Uniqueness of the forward weak KAM solution}
Let us now come back to the analysis of equation \eqref{ex-unique}. In view of Proposition \ref{prop disreversible}, the uniqueness of the forward weak KAM solution
to equation \eqref{ex-unique} is a consequence of the following more general result.

\begin{proposition}
Let $H:T^*\T^1\to\R$ be a $C^3$--Hamiltonian, satisfying hypotheses (H1)-(H2). Let us assume that the Mather set $\tilde \M^\lambda$ associated with
the discounted Hamilton-Jacobi equation
\begin{equation}\label{last HJ eq}
 \lambda u+H(x,-d_x u)=c(H)\qquad\hbox{in $\T^1$}
\end{equation}
reduces to $\{(x_0,0)\}$ and that $(x_0,0)$ is a hyperbolic fixed point for the discounted flow generated by \eqref{cott}. Then equation \eqref{last HJ eq} admits a unique
forward weak KAM solution.
\end{proposition}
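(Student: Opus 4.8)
The plan is to exploit Proposition \ref{copp}: any two forward weak KAM solutions $v_1,v_2$ of \eqref{last HJ eq} that agree on a neighborhood of the Mather set $\M^\lambda=\{x_0\}$ must coincide on all of $\T^1$. Hence uniqueness reduces to showing that \emph{every} forward weak KAM solution is determined near $x_0$, indeed that it agrees near $x_0$ with the maximal solution $u_\lambda^+$ up to the obvious additive normalization; since we are dealing with $\T^1$ there are no additive constants to worry about once we pin the value at $x_0$. So the real content is local: to understand the behavior of forward weak KAM solutions in a neighborhood of the hyperbolic fixed point $(x_0,0)$.

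First I would record what we already know at $x_0$. By Theorem \ref{teo forward sol}(iii) and the fact that $\tilde\M^\lambda\subseteq\tilde{\mathcal I}_v^\lambda\subseteq\tilde{\mathcal A}^\lambda=\tilde{\mathcal I}_{u_\lambda^+}^\lambda$ for every $v\in\cS_\lambda^+$, every forward weak KAM solution $v$ satisfies $v(x_0)=u_\lambda(x_0)$ and $d_{x_0}v=0$. Thus all forward weak KAM solutions share the same $1$-jet at $x_0$. Next, I would use the hyperbolicity of $(x_0,0)$: since the fixed point is hyperbolic for the flow $\check\Phi_\lambda^t$ generated by \eqref{cott}, the stable and unstable manifolds $W^s(x_0,0)$ and $W^u(x_0,0)$ are $C^1$ (in fact smoother) one-dimensional Lagrangian-type curves through $(x_0,0)$. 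The graph $G_v$ of any forward weak KAM solution $v$ is forward-invariant under $\check\Phi_\lambda^t$, and the calibrated curves issuing from any point $x$ near $x_0$ project integral curves of the discounted flow whose $\omega$-limit set meets $\tilde\M^\lambda=\{(x_0,0)\}$, by Proposition \ref{asypp}; since $\{(x_0,0)\}$ is the whole limit set, these orbits actually converge to $(x_0,0)$. Hence near $x_0$ the graph $G_v$ is forced to lie on the stable manifold $W^s(x_0,0)$ — this is the key step, and it is where hyperbolicity is genuinely used. The branch of $W^s(x_0,0)$ to the right of $x_0$ and the branch to the left are each uniquely determined (they are the local stable manifold of a hyperbolic fixed point), so locally $G_v$ coincides with a fixed $C^1$ graph, independent of $v$. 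Integrating $d_x v$ along this fixed graph and using the common value $v(x_0)=u_\lambda(x_0)$ shows $v\equiv u_\lambda^+$ on a neighborhood $\mathcal O$ of $x_0$.

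With that local rigidity in hand, the conclusion is immediate: given any $v\in\cS_\lambda^+$, we have $v=u_\lambda^+$ on a neighborhood $\mathcal O$ of $\M^\lambda=\{x_0\}$, and $\M_v^\lambda\subseteq\M^\lambda$, so $\mathcal O$ is a neighborhood of $\M_v^\lambda$ as well. Applying Proposition \ref{copp} twice (once with $(v_1,v_2)=(v,u_\lambda^+)$ and once with $(v_1,v_2)=(u_\lambda^+,v)$, using that $\M_{u_\lambda^+}^\lambda=\M^\lambda$) gives $v\le u_\lambda^+$ and $u_\lambda^+\le v$ on $\T^1$, hence $v=u_\lambda^+$.

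The main obstacle I anticipate is the key step of the second paragraph: showing that the forward-invariant graph $G_v$ must coincide, near $x_0$, with the local stable manifold of $(x_0,0)$. One has to argue carefully that the calibrated curves through points near $x_0$ stay near $x_0$ for all positive time (not merely that their $\omega$-limit is $\{(x_0,0)\}$, which a priori allows wandering excursions), so that the relevant orbits lie in a small neighborhood where the Hartman–Grobman / stable-manifold picture applies; monotonicity along calibrated curves of a Lyapunov-type quantity (for instance $u_\lambda(\gamma(t)) - v(\gamma(t))$, or the energy-type function used in the proof of Proposition \ref{prop invariant measure}) should provide the needed trapping. A secondary technical point is handling the two sides of $x_0$ in $\T^1$ separately and checking that the two one-sided pieces of $W^s(x_0,0)$ glue to a well-defined graph over a full neighborhood, which is where the one-dimensionality of the base manifold is essential.
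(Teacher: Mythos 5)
Your outline follows the same strategy as the paper (common $1$-jet at $x_0$, identification of the graph of $d_xv$ with the local stable manifold near $x_0$, integration, and then Proposition \ref{copp}), but the step you yourself flag as the main obstacle is a genuine gap, and the repairs you sketch would not close it. Proposition \ref{asypp} only yields $\omega(\bar x,\bar p)\cap\tilde\M_v^\lambda\neq\emptyset$, i.e. $(x_0,0)\in\omega(\bar x,\bar p)$; it does not say that the $\omega$-limit set reduces to $\{(x_0,0)\}$, so the assertion that the orbits ``actually converge to $(x_0,0)$'' is exactly what has to be proved. Neither of your candidate Lyapunov functions works at the stated level of generality: the energy monotonicity in Proposition \ref{prop invariant measure} uses reversibility of $H$, which is not assumed in this proposition, while the decay of $e^{\lambda t}\big(u_\lambda-v\big)(\gamma(t))$ along $v$-calibrated curves only pushes the orbit towards the contact set $\tilde{\mathcal{I}}_v^\lambda$, which a priori may be strictly larger than $\{(x_0,0)\}$ (it can be as large as $\tilde\A^\lambda$), and in any case this adds nothing to Proposition \ref{wwyrr}-(iii), which already gives $\omega(\bar x,\bar p)\subseteq\tilde{\mathcal{I}}_v^\lambda$. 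A smaller imprecision: the inclusion $\tilde\M^\lambda\subseteq\tilde{\mathcal{I}}_v^\lambda$ is not a general fact; the common $1$-jet at $x_0$ comes from $\emptyset\neq\tilde\M_v^\lambda\subseteq\tilde\M^\lambda=\{(x_0,0)\}$ together with $\tilde\M_v^\lambda\subseteq\tilde{\mathcal{I}}_v^\lambda$.

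The paper closes the gap without any trapping argument, exploiting the one-dimensionality of $\T^1$ in a different way. Take a differentiability point $\bar x$ of $v$ near $x_0$ and set $(x(t),p(t)):=\check\Phi_\lambda^t(\bar x,d_{\bar x}v)$; by Proposition \ref{wwyrr}, $p(t)=d_{x(t)}v$ for all $t\geq 0$. Hence the projected curve cannot pass twice through the same base point $\hat x$ with velocities of opposite sign, since at both times the velocity equals $\frac{\partial\check H}{\partial p}(\hat x,d_{\hat x}v)$; combined with the fact that a subsequence $x(t_n)$ accumulates at $x_0$ (Proposition \ref{asypp}), this forces $t\mapsto x(t)$ to be monotone, hence $x(t)\to x_0$ even if the orbit is a priori allowed large excursions. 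One then uses that $v$ is semiconvex (because $-v$ solves $H(x,d_xu)=c(H)-\lambda v$ and is therefore semiconcave) and differentiable at $x_0$ with $d_{x_0}v=0$ to conclude $p(t)=d_{x(t)}v\to 0$, so that $(\bar x,d_{\bar x}v)\in W^s(x_0,0)$; consequently $d_xv$ coincides a.e.\ near $x_0$ with the $C^2$ function $h$ parametrizing the local stable manifold, and integration from $x_0$ identifies $v$ on $[x_0-\delta,x_0+\delta]$. This monotonicity-plus-semiconvexity argument is the missing ingredient you would need to supply before invoking Proposition \ref{copp}.
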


\begin{proof}
Let us denote by $\mathcal{S}_\lambda^+$ be the set of all forward weak KAM solutions of (\ref{last HJ eq}).
For each $v\in \mathcal{S}_\lambda^+ $, we have $\tilde{\mathcal{M}}^\lambda_{v }\subseteq \tilde{\mathcal{M}}^\lambda$. Since $\tilde{\mathcal{M}}^\lambda_{v }$ is nonempty, we
necessarily have
$$
\tilde{\mathcal{M}}^\lambda_{v }=\{(x_0,0)\}.
$$
This means $v (x_0)=0$ and $d_{x_0} v=0$.

We denote for simplicity, the stable submanifold of $(x_0,0)$ with respect to $\check\Phi_\lambda^t$ by
\[W^s(x_0,0):=\{(x,p)\in T^*\T^1\ |\ \lim_{t\rightarrow+\infty}d(\check\Phi_\lambda^t(x,p),(x_0,0))=0\},\]
where $d(\cdot,\cdot)$ is a Riemannian metric on $T^*\T^1$.
Given $\eps>0$, we denote the local stable submanifold of $(x_0,0)$ with respect to $\check\Phi_\lambda^t$ by
\[W_\eps^s(x_0,0):=\{(x,p)\in W^s(x_0,0)\ |\ d(\check\Phi_\lambda^t(x,p),(x_0,0))<\eps,\ \forall t\geq 0\}.\]
Since $\check{H}\in C^3$, by the stable manifold theorem \cite{HPS}, there exist $\delta>0$, $h\in C^2$ with $h(x_0)=0$ such that
\[[x_0-\delta,x_0+\delta]\subseteq \pi \big( W_\eps^s(x_0,0),\quad W_\eps^s(x_0,0)=\{(x,h(x))\ |\ x\in [x_0-\delta,x_0+\delta]\},\]
where $\pi :T^*\T^1\rightarrow \T^1$ denotes the standard projection.

For each $v\in \mathcal{S}_\lambda^+ $, $v$ is Lipschitz continuous. Furthermore, it is semiconvex. Indeed, $-v$ is a solution to
\[
H(x,d_xu)=c-\lambda v(x)\qquad \hbox{in $\T^1$,}
\]
so it is semiconcave in view of the results in \cite{CS89}.
Denote $\mathcal{D}$ be the set of all differentiable points of $v$.
Pick $\bar{x}\in \mathcal{D}\cap (x_0-1,x_0)$ and set $\bar{p}:=d_{\bar{x}}v$. Let $(x(t),p(t)):=\check\Phi_\lambda^t(\bar{x},\bar{p})$ for all $t\geq 0$.
By Proposition \ref{wwyrr}, $d_{x(t)}v=p(t)$ for all $t\geq 0$. In view of Proposition \ref{asypp}, $(x_0,0)\in \omega(\bar{x},\bar{p})$. Thus,
there exists a diverging sequence $(t_n)_n$ such that either $x(t_n)\rightarrow x_0$ or $x(t_n)\to x_0-1$ as $t_n\rightarrow+\infty$.
We assert that the map $t\mapsto x(t)$ is monotone in $[0,+\infty)$. Let us assume for definiteness that $x(t_n)\rightarrow x_0$.
We claim that $\dot x(t)\geq 0$ for all $t>0$. In fact, let us assume by contradiction that $\dot{x}(t_0)<0$ for some $t_0>0$.
By the fact that $x(t_n)\rightarrow x_0$, one can find $t_1>t_0$ such that $x(t_0)=x(t_1)=:\hat{x}$ and $\dot{x}(t_1)\geq 0$, a contradiction to the fact that
\[
\dot x (t_0)=\frac{\partial\check H}{\partial p}(\hat x,d_{\hat x} v)=\dot x(t_1).
\]
The monotonicity of $t\mapsto x(\cdot)$ implies that $x(t)\rightarrow x_0$. Furthermore, $v$ is differentiable at $x_0$ and at $x(t)$ for every $t>0$. By semiconvexity,
we infer that $d_{x(t)}v\rightarrow d_{x_0}v=0$ as $t\to +\infty$, i.e.
\[
\check\Phi_\lambda^t(\bar{x},\bar{p})\rightarrow (x_0,0)\qquad \text{as}\ t\rightarrow+\infty.
\]
That implies $(\bar{x},\bar{p})\in W^s(x_0,0)$.  For each  $\tilde{x}\in \mathcal{D}\cap [x_0-\delta,x_0+\delta]$, we have $(\tilde{x},d_{\tilde{x}}v)\in W_\eps^s(x_0,0)$. Moreover, $d_xv=h$ on $\mathcal{D}\cap [x_0-\delta,x_0+\delta]$. Note that $\mathcal{D}$ has full Lebesgue measure on $\T^1$. It follows that, for each $x\in[x_0-\delta,x_0+\delta]$,
\[v(x)=\int_{x_0}^xd_yv(y) dy=\int_{x_0}^x h(y) dy.\]
This shows $d_xv=h$ on $[x_0-\delta,x_0+\delta]$.

For any $v_1,v_2\in \mathcal{S}_\lambda^+$, we have $v_1(x_0)=v_2(x_0)=0$ and $d_xv_1=d_xv_2=h$ on $[x_0-\delta,x_0+\delta]$.
That yields $v_1=v_2$ on $[x_0-\delta,x_0+\delta]$. By Proposition \ref{copp}, $v_1=v_2$ on $\T^1$, namely \eqref{last HJ eq} has a unique forward weak KAM solution.
\end{proof}


\vskip 1cm

\noindent {\bf Acknowledgements:}
The authors would like to thank Professor Antonio Siconolfi and Professor  Jun Yan  for very helpful conversations on this topic.  They also would like to
thank an anonymous referee for pointing out an error in the proof and in the statement of the submitted version of Proposition \ref{prop disreversible}  
and for several comments and suggestions that helped
to shorten up and improve the presentation.  Lin Wang was supported by NSFC Grant No. 11790273, 11631006.

\medskip

\bibliography{discounted}

\begin{thebibliography}{10}

\bibitem{barles}
{\sc G.~Barles}, {\em Solutions de viscosit\'e des \'equations de
  {H}amilton-{J}acobi}, vol.~17 of Math\'ematiques \& Applications (Berlin)
  [Mathematics \& Applications], Springer-Verlag, Paris, 1994.

\bibitem{BernardC11}
{\sc P.~Bernard}, {\em Existence of {$C^{1,1}$} critical sub-solutions of the
  {H}amilton-{J}acobi equation on compact manifolds}, Ann. Sci. \'Ecole Norm.
  Sup. (4), 40 (2007), pp.~445--452.

\bibitem{BernardSmooth}
\leavevmode\vrule height 2pt depth -1.6pt width 23pt, {\em Smooth critical
  sub-solutions of the {H}amilton-{J}acobi equation}, Math. Res. Lett., 14
  (2007), pp.~503--511.

\bibitem{CCJWY}
{\sc P.~{Cannarsa}, W.~{Cheng}, L.~{Jin}, K.~{Wang}, and J.~{Yan}}, {\em
  {Herglotz' variational principle and Lax-Oleinik evolution}}, J. Math. Pures
  Appl.,  (to appear).

\bibitem{CCY}
{\sc P.~Cannarsa, W.~Cheng, K.~Wang, and J.~Yan}, {\em Herglotz' generalized
  variational principle and contact type {H}amilton-{J}acobi equations}, in
  Trends in control theory and partial differential equations, vol.~32 of
  Springer INdAM Ser., Springer, Cham, 2019, pp.~39--67.

\bibitem{CS89}
{\sc P.~Cannarsa and H.~M. Soner}, {\em Generalized one-sided estimates for
  solutions of {H}amilton-{J}acobi equations and applications}, Nonlinear
  Anal., 13 (1989), pp.~305--323.

\bibitem{CCIZ}
{\sc Q.~Chen, W.~Cheng, H.~Ishii, and K.~Zhao}, {\em Vanishing contact
  structure problem and convergence of the viscosity solutions}, Comm. Partial
  Differential Equations, 44 (2019), pp.~801--836.

\bibitem{Cl}
{\sc F.~H. Clarke}, {\em Optimization and nonsmooth analysis}, Canadian
  Mathematical Society Series of Monographs and Advanced Texts, John Wiley \&
  Sons Inc., New York, 1983.
\newblock A Wiley-Interscience Publication.

\bibitem{CIPP}
{\sc G.~Contreras, R.~Iturriaga, G.~P. Paternain, and M.~Paternain}, {\em
  Lagrangian graphs, minimizing measures and {M}a\~{n}\'{e}'s critical values},
  Geom. Funct. Anal., 8 (1998), pp.~788--809.

\bibitem{DFIZ1}
{\sc A.~Davini, A.~Fathi, R.~Iturriaga, and M.~Zavidovique}, {\em Convergence
  of the solutions of the discounted {H}amilton-{J}acobi equation: convergence
  of the discounted solutions}, Invent. Math., 206 (2016), pp.~29--55.

\bibitem{Fa12}
{\sc A.~Fathi}, {\em Weak {KAM} from a {PDE} point of view: viscosity solutions
  of the {H}amilton-{J}acobi equation and {A}ubry set}, Proc. Roy. Soc.
  Edinburgh Sect. A, 142 (2012), pp.~1193--1236.

\bibitem{Fat-b}
\leavevmode\vrule height 2pt depth -1.6pt width 23pt, {\em Weak {KAM} {T}heorem
  in {L}agrangian {D}ynamics, preliminary version 10, {L}yon}.
\newblock unpublished, June 15 2008.

\bibitem{FSC1}
{\sc A.~Fathi and A.~Siconolfi}, {\em Existence of {$C^1$} critical
  subsolutions of the {H}amilton-{J}acobi equation}, Invent. Math., 155 (2004),
  pp.~363--388.

\bibitem{FS05}
\leavevmode\vrule height 2pt depth -1.6pt width 23pt, {\em P{DE} aspects of
  {A}ubry-{M}ather theory for quasiconvex {H}amiltonians}, Calc. Var. Partial
  Differential Equations, 22 (2005), pp.~185--228.

\bibitem{Go}
{\sc D.~A. Gomes}, {\em Generalized {M}ather problem and selection principles
  for viscosity solutions and {M}ather measures}, Adv. Calc. Var., 1 (2008),
  pp.~291--307.

\bibitem{Go1}
{\sc D.~A. {Gomes}, H.~{Mitake}, and H.~V. {Tran}}, {\em {The Selection problem
  for discounted Hamilton-Jacobi equations: some non-convex cases}}, ArXiv
  e-prints,  (2016).

\bibitem{HPS}
{\sc M.~W. Hirsch, C.~C. Pugh, and M.~Shub}, {\em Invariant manifolds}, Lecture
  Notes in Mathematics, Vol. 583., Springer-Verlag, Berlin-New York, 1977.

\bibitem{IsMiTr-discount1}
{\sc H.~Ishii, H.~Mitake, and H.~V. Tran}, {\em The vanishing discount problem
  and viscosity {M}ather measures. {P}art 1: {T}he problem on a torus}, J.
  Math. Pures Appl. (9), 108 (2017), pp.~125--149.

\bibitem{IsMiTr-discount2}
\leavevmode\vrule height 2pt depth -1.6pt width 23pt, {\em The vanishing
  discount problem and viscosity {M}ather measures. {P}art 2: {B}oundary value
  problems}, J. Math. Pures Appl. (9), 108 (2017), pp.~261--305.

\bibitem{KB}
{\sc N.~Kryloff and N.~Bogoliuboff}, {\em La th\'eorie g\'en\'erale de la
  mesure et son application \`a l'\'etude des syst\`emes dynamiques de la
  m\'ecanique non lin\'eaire}, Ann. Math. II. S\'er., 38 (1937), pp.~65--113.

\bibitem{MS}
{\sc S.~Mar\`o and A.~Sorrentino}, {\em Aubry-{M}ather theory for conformally
  symplectic systems}, Comm. Math. Phys., 354 (2017), pp.~775--808.

\bibitem{MK}
{\sc H.~Mitake and K.~Soga}, {\em Weak {KAM} theory for discounted
  {H}amilton-{J}acobi equations and its application}, Calc. Var. Partial
  Differential Equations, 57 (2018), pp.~Art. 78, 32.

\bibitem{MiTr-discount}
{\sc H.~Mitake and H.~V. Tran}, {\em Selection problems for a discount
  degenerate viscous {H}amilton--{J}acobi equation}, Adv. Math., 306 (2017),
  pp.~684--703.

\bibitem{Sic_chapter}
{\sc A.~Siconolfi}, {\em Hamilton-{J}acobi equations and weak {KAM} theory}, in
  Mathematics of complexity and dynamical systems. {V}ols. 1--3, Springer, New
  York, 2012, pp.~683--703.

\bibitem{SWY}
{\sc X.~Su, L.~Wang, and J.~Yan}, {\em Weak {KAM} theory for
  {H}amilton-{J}acobi equations depending on unknown functions}, Discrete
  Contin. Dyn. Syst., 36 (2016), pp.~6487--6522.

\bibitem{WWY}
{\sc K.~Wang, L.~Wang, and J.~Yan}, {\em Implicit variational principle for
  contact {H}amiltonian systems}, Nonlinearity, 30 (2017), pp.~492--515.

\bibitem{WWY2}
\leavevmode\vrule height 2pt depth -1.6pt width 23pt, {\em Aubry-{M}ather
  theory for contact {H}amiltonian systems}, Comm. Math. Phys., 366 (2019),
  pp.~981--1023.

\bibitem{WWY1}
\leavevmode\vrule height 2pt depth -1.6pt width 23pt, {\em Variational
  principle for contact {H}amiltonian systems and its applications}, J. Math.
  Pures Appl. (9), 123 (2019), pp.~167--200.

\bibitem{WY}
{\sc Y.-N. Wang and J.~Yan}, {\em A variational principle for contact
  {H}amiltonian systems}, J. Differential Equations, 267 (2019),
  pp.~4047--4088.

\bibitem{ZC}
{\sc K.~Zhao and W.~Cheng}, {\em On the vanishing contact structure for
  viscosity solutions of contact type {H}amilton-{J}acobi equations {I}:
  {C}auchy problem}, Discrete Contin. Dyn. Syst., 39 (2019), pp.~4345--4358.

\end{thebibliography}
\bibliographystyle{siam}

\end{document}